\documentclass[hidelinks,onefignum,onetabnum]{siamart251216}

\usepackage{enumerate}
\usepackage{amsfonts}
\usepackage{graphicx}
\usepackage{subcaption} 
\usepackage{epstopdf}
\usepackage[dvipsnames]{xcolor}

\usepackage{algorithmic}
\ifpdf
  \DeclareGraphicsExtensions{.eps,.pdf,.png,.jpg}
\else
  \DeclareGraphicsExtensions{.eps}
\fi

\newcommand{\N}{\mathbb N}

\newcommand{\R}{\mathbb R}
\newcommand{\C}{\mathbb C}

\newcommand{\St}{\operatorname{St}}

\newcommand{\mcR}{\mathcal R}

\newcommand{\tr}{\operatorname{tr}}
\newcommand{\sym}{\operatorname{sym}}
\newcommand{\Skew}{\operatorname{skew}} 

\newcommand{\dist}{\operatorname{dist}}
\newcommand{\Exp}{\operatorname{Exp}}
\newcommand{\Log}{\operatorname{Log}}
\newcommand{\Cay}{\operatorname{Cay}}

\DeclareMathOperator{\diag}{diag}
\makeatletter
\renewcommand*\env@matrix[1][*\c@MaxMatrixCols c]{%
  \hskip -\arraycolsep
  \let\@ifnextchar\new@ifnextchar
  \array{#1}}
\makeatother

\newsiamremark{remark}{Remark}
\newsiamremark{hypothesis}{Hypothesis}
\crefname{hypothesis}{Hypothesis}{Hypotheses}
\newsiamthm{claim}{Claim}
\newsiamremark{fact}{Fact}
\crefname{fact}{Fact}{Facts}

\headers{Error bounds for second-order Stiefel retractions}{R. Jensen, and R. Zimmermann}

\title{Conditioning and interpolation error bounds for second-order Stiefel retractions with closed-form inverses \thanks{Submitted to the editors DATE.
\funding{This work was supported by the Independent Research Foundation Denmark, DFF, grant nr. 3103-00094B}}}

\author{Rasmus Jensen\thanks{Department of  Mathematics and Computer Science, SDU Odense 
  (\email{rasmusj@imada.sdu.dk}, \email{zimmermann@imada.sdu.dk}).}
\and Ralf Zimmermann\footnotemark[2]}

\usepackage{amsopn}
\usepackage{physics}

\usepackage[normalem]{ulem}

\ifpdf
\hypersetup{
  pdftitle={Conditioning and interpolation error bounds for second-order Stiefel retractions with closed-form inverses},
  pdfauthor={R. Jensen, and R. Zimmermann}
}
\fi

\begin{document}

\maketitle

\begin{abstract}
Retractions provide a computationally efficient alternative to the Riemannian exponential and logarithm maps for practical data-processing tasks on manifolds. 
In particular, second-order retractions with closed-form inverse are well-suited for interpolation problems on manifolds.
On the Stiefel manifold of orthogonal frames, there are only two retractions of this type: the Cayley retraction, which is second-order accurate under the canonical metric, and the recently proposed polar-light retraction, which is second-order accurate under the Euclidean metric.

In this paper, we study the properties of these maps in the context of interpolation on the Stiefel manifold. 
To obtain explicit interpolation error bounds, we examine the conditioning of the retraction maps and their inverses. We show that the retractions are well-conditioned, and we derive interpolation error bounds similar to those of classical Euclidean interpolation. The inverse retractions are not well-conditioned in general, and we discuss how data can be mapped via an isometric group action to ensure stable computations. 
As with all retractions on compact manifolds, the inverse canonical Cayley retraction and the inverse polar-light retraction exist only locally, and we construct normal neighborhoods around any point in which either the inverse Cayley retraction or the invese polar-light retraction are guaranteed to be computable. 

As an application of the retraction maps, we consider Hermite interpolation, where the objective is to reproduce both sampled function values and derivative information. A numerical example demonstrates that retraction-based interpolation is competitive with classical methods based on Riemannian normal coordinates.

\end{abstract}

\begin{keywords}
Stiefel manifold, retraction, manifold interpolation, manifold optimization, local coordinates, Riemannian exponential, geodesics, Riemannian computing
\end{keywords}

\begin{MSCcodes}
  15A16, 
  15B10, 
    53Z50, 
  65D05, 
  65F60  
\end{MSCcodes}

\section{Introduction} 

Interpolation of data on the Stiefel manifold of column-\\ orthogonal matrices $\St(n,p)=\{U\in \R^{n\times p}: U^TU=I_p\}$ has received considerable attention due to its applications in parametric model-order reduction \cite{ ElOmari:2025,Friderikos2021,Zimmermann2018}. The task, given sample data $U_1,\dots,U_m$ at time instances $t_1<\dots<t_m$, is to construct a curve $c:I\to \St(n,p)$ so that $c(t_i)=U_i$, for all $i$. 

Interpolation on the Stiefel manifold, as well as on general Riemannian manifolds, is classically performed using Riemannian normal coordinates. Choosing an anchor $\hat U$, which may, for example, be one of the data points or their Riemannian center of mass, one maps all data to the tangent vector space $T_{\hat U}\St(n,p)$, applies an interpolation scheme (Lagrange, Hermite, splines,\dots) there, and maps the computed interpolant back to the manifold. The advantage of this approach is that the user can choose any off-the-shelf method from classical Euclidean interpolation in vector spaces. 

Riemannian normal coordinates are favorable for establishing interpolation error bounds, because they allow for distance estimates based on the manifold's curvature. Using local sectional curvature information, in \cite{ZimmermannHermite_2020}, it was shown that under Riemannian normal coordinates, employing an interpolation scheme of order $k$ in the tangent space yields a manifold interpolant of the same asymptotic order $\mathcal{O}(h^k)$, as $h\to 0$.\footnote{Strictly speaking, this result is an immediate consequence of \cite[Thm. 3]{ZimmermannHermite_2020}, if the anchor point is not one of the sample points.} A related result based on global curvature bounds is in \cite{Jacobsson:2024}. When replacing the Riemannian exponential map with a retraction, the work \cite{SeguinKressner:2024} proves that Hermite interpolation via a Riemannian De Casteljau algorithm exhibits an asymptotic error $\mathcal{O}(h^4)$ as $h\to 0$. The paper \cite{jensen2025maxvol} shows that for the Grassmann manifold and a particular choice of local coordinates, Euclidean interpolation error estimates transfer directly to the manifold, preserving their order.

\textit{Original contribution.} We derive explicit interpolation error bounds for interpolation schemes on the Stiefel manifold under either the polar-light retraction or the canonical Cayley retraction, and their respective inverses. For the retractions at hand, the process of mapping data from the tangent space to the manifold is well-conditioned, while mapping data to the tangent space can be arbitrarily ill-conditioned. As the Stiefel manifold is homogeneous, we employ a data-centering scheme that sends a given sample data cloud isometrically to a local domain with favorable conditions for numerical computations. We furthermore construct two respective explicit normal neighborhoods around any point $\hat U\in \St(n,p)$, for which we are guaranteed that the inverse Cayley retraction and the inverse polar-light retraction, respectively, is well-defined. 

Secondly, we consider Hermite interpolation on the Stiefel manifold. 
In contrast to \cite{ZimmermannHermite_2020}, where the use of the Riemannian normal coordinates made finite-difference approximation unavoidable for mapping derivative information to the tangent space, here, we map derivatives using the differential of the inverse retractions.
For the polar-light retraction in its original form, the Fr\'echet derivative of the matrix logarithm appears.
To avoid the costly computation, which would require applying an inverse scaling and squaring method \cite{Mohy:2013} or numerical integration, we instead replace the matrix exponential and logarithm with the Cayley transformation and its inverse. Computing the resulting differential requires solving a ($p\times p$) Lyapunov equation whose solution exists and is stable whenever the inverse polar-light retraction with the Cayley transformation replacing the matrix logarithm is stable.

\textit{Organization:} In \Cref{sec:background} we recall the Stiefel manifold and the polar-light coordinate chart of \cite{Jensen:2026}, and gather known conditioning results for the matrix polar decomposition. \Cref{sec:err_analysis} is the main section of this paper where we carry out the error analysis and derive manifold interpolation error bounds. In \Cref{sec:Hermite_interpol} we discuss Hermite interpolation using the retractions of interest and carry out a numerical experiment. The paper is concluded in \Cref{sec:concluding_remarks}.

\section{Background}\label{sec:background}

We set the scene by recalling the basic properties of the Stiefel manifold, supplemented by the recently introduced coordinate chart of \cite{Jensen:2026}. For an elaborate discussion of the Stiefel manifold we refer the reader to \cite{AbsilMahonySepulchre2008,EdelmanAriasSmith1999}. We also quickly discuss the conditioning of the factors in the matrix polar decompositions from \cite{Higham:2008:FM}. 

A few matrix preliminaries are due.
For $p\in \N$, the identity matrix is denoted by $I_p\in\R^{p\times p}$, or simply $I$, if the dimension is clear.
The $(p\times p)$-orthogonal group is denoted by
$
  O(p) = \{Q \in \R^{p\times p}\mid Q^TQ = QQ^T = I_p\}.
$
The special orthogonal group is $SO(p)= \{Q\in O(p)\mid  \det(Q) = 1\}$.
The sets of symmetric and skew-symmetric $(p\times p)$-matrices are $\sym(p) = \{A\in\R^{p\times p}|A^T=A\}$ and
$\Skew(p) = \{A\in\R^{p\times p}|A^T=-A\}$, respectively.
The set of $(p\times p)$-symmetric positive definite matrices is denoted by $SPD(p)$.
The matrix exponential and principal matrix logarithm are denoted by
\[
 \exp_m(X):=\sum_{j=0}^\infty{\frac{X^j}{j!}}, \quad \log_m(I+X):=\sum_{j=1}^\infty{(-1)^{j+1}\frac{X^j}{j}}.
\]
It holds $\exp_m\big\vert_{\Skew(p)}: \Skew(p)\to SO(p)$, $\log_m\big\vert_{SO(p)}: SO(p) \to \Skew(p)$ when well-defined.

The Cayley transformation and its inverse can be used as structure-preserving approximations of the matrix exponential and logarithm. The definitions read
\begin{subequations}
\begin{align}
\label{eq:Cay_trafo}
    \Cay(A)&=&\qty(I-A)^{-1}\qty(I+A), 
    \, \quad & \Cay\big\vert_{\Skew(p)}:& \Skew(p)\to SO(p) \\
\label{eq:Cay_trafo_inv}
    \Cay^{-1}(R)&=&(R+I)^{-1}(R-I),
    \, \quad & \Cay^{-1}\big\vert_{SO(p)}:& SO(p)\to \Skew(p). \ 
\end{align}
\end{subequations}
It holds that $\exp_m(A)\approx \Cay(\frac{1}{2}A)$ and $\log_m(R)\approx 2\Cay^{-1}(R)$ and the approximations are accurate up to a Taylor expansion of second order.

\subsection{The Stiefel manifold}\label{sec:background_Stiefel} The Stiefel manifold of orthogonal $p$-frames is  
\begin{equation*}
    \St(n,p)=\qty{U\in \R^{n\times p}:U^TU=I_p}.
\end{equation*}
It is a compact smooth manifold of dimension $\frac{1}{2}p(p-1)+(n-p)p$ and is Riemannian when endowing the tangent space at each anchor point $\hat U\in St(n,p)$,
\begin{equation*}
    T_{\hat U}\St(n,p)=\qty{\xi\in \R^{n\times p}:\hat U^T\xi+\xi^T\hat U=0},
\end{equation*}
with an inner product, e.~g., 
with the Euclidean metric $g_e(\xi, \Delta)=\tr(\xi^T\Delta)$ or the canonical metric $g_c(\xi, \Delta)=\tr(\xi^T(I_n-\tfrac{1}{2}UU^T)\Delta)$. A general parametric family of $\beta$-metrics is discussed in \cite{HueperMarkinaLeite2020}, for which the Riemannian exponential map is given by 
\begin{equation}
\label{eq:Riemann_exp}
    \Exp_{\hat U}(\xi)=\begin{bmatrix}
        \hat U&\hat U_\perp
    \end{bmatrix}\exp_\mathrm{m}\left(\begin{bmatrix}
        2\beta A&-B^T\\
        B&0
    \end{bmatrix}\right)   
    \begin{bmatrix}
   I_p\\
    0
  \end{bmatrix}\exp_{\mathrm{m}}((1-2\beta)A), \hspace{0.2cm} \xi = \hat UA + \hat U_\perp B.
\end{equation}
Here, $\hat U_\perp$ is an orthogonal completion such that $\begin{bmatrix}
        \hat U&\hat U_\perp
    \end{bmatrix}\in O(n)$.
The Euclidean metric is obtained from $\beta=1$, and  $\beta=\frac{1}{2}$ yields the canonical metric. 

\subsection{The polar light retraction}
\label{sec:polar_light_background}

We now state the local coordinate charts termed polar-light coordinates in \cite{Jensen:2026}. For each fixed $\hat U \in \St(n,p)$, the map
\begin{equation}\label{eq:pl_retraction}
   T_{\hat U}\St(n,p)\ni \xi \mapsto \varphi_{\hat U}(\xi) = \left(\hat U\exp_m(\hat U^T\xi) + (I-\hat U\hat U^T)\xi\right)\left(I_p+\xi^T(I-\hat U\hat U^T)\xi\right)^{-\frac12}
\end{equation}
defined on a domain around $0\in T_{\hat U}\St(n,p)$ is a second-order retraction under the Euclidean metric. The main difference to the standard polar factor retraction \cite[eq. (4.7)]{AbsilMahonySepulchre2008} is the appearance of the $(p\times p)$ matrix exponential. It enables computing a closed-form inverse 
\begin{equation}\label{eq:pl_inverse}
    \St(n,p)\ni U \mapsto \psi_{\hat U}(U) = \hat U\log_m\left( \hat U^TU (U^T \hat U \hat U^TU)^{-\frac12}\right) + (I- \hat U\hat U^T)U(U^T\hat U \hat U^TU)^{-\frac12}.
\end{equation}
Note that $\hat U^TU (U^T \hat U \hat U^TU)^{-\frac12}\in O(p)$ is the orthogonal polar factor of $\hat U^TU$, see \Cref{sec:cond_matrix_polar}.

Let $E=\begin{bmatrix}
    I_p\\
    0
\end{bmatrix}$. 
Due to the homogeneous space structure of $\St(n,p)$, we can move from any point to any point via a group action.
To make this explicit, consider 
$\hat Q=\begin{bmatrix}
    \hat U&\hat U_\perp
\end{bmatrix}\in O(n)$ and observe that $\hat Q$ moves $\hat U$ to $E$ via $\hat Q^T \hat U = E$, and any tangent vector 
$\xi=\hat UA+\hat U_{\bot} B \in T_{\hat U}St(n,p)$ is mapped to its block coordinates $A,B$ via $\hat Q^T\xi = \hat Q^T(UA+\hat U_{\bot} B) =\begin{bmatrix}
    A\\ B
\end{bmatrix} $.
Then
\begin{equation}\label{eq:psi_Q}
    \psi_{\hat U}(U) = \hat Q \psi_{E}(\hat{Q}^TU),\quad 
    \psi_E\qty(\begin{bmatrix}
        U_1\\ U_2
    \end{bmatrix}) =  \begin{bmatrix}
        \log_m(U_1 (U_1^TU_1)^{-\frac12})\\ 
        U_2(U_1^TU_1)^{-\frac12})
    \end{bmatrix}.
\end{equation}
The corresponding parametrization is 
\begin{equation}\label{eq:varphi_Q}
    \varphi_{\hat U}(\xi) = \hat Q \varphi_E(\hat Q^T \xi),
    \quad 
    \varphi_E\qty(\begin{bmatrix}
        A\\B
    \end{bmatrix}) =  \begin{bmatrix}
        \exp_m(A)\\ B
    \end{bmatrix} (I_p + B^TB)^{-\frac12}.
\end{equation}
For our analysis in \Cref{sec:err_analysis} and the practical example provided in \Cref{sec:practical_example}, we will not explicitly use the retraction \eqref{eq:pl_retraction} and its inverse \eqref{eq:pl_inverse}, but instead rely on \eqref{eq:psi_Q} and \eqref{eq:varphi_Q}. 
The data transformation with $\hat Q$ is considered as a preprocessing and postprocessing step that can be executed efficiently by representing $\hat Q$ as a low-rank modification of the identity $I_n$ as in \cite{Jensen:2026}.


\subsection{The Cayley retraction}
The Cayley retraction is associated with the canonical metric and stems from replacing the matrix exponential in \eqref{eq:Riemann_exp} with the Cayley transformation \eqref{eq:Cay_trafo}.
This yields \cite{Wen:2013} 
\begin{equation}\label{eq:cayley_retraction}
    \mcR_{\hat U}(\xi)=\Cay\qty(\tfrac{1}{2}(P_{\hat U}\xi {\hat U}^T-{\hat U}\xi^TP_{\hat U})){\hat U},
\end{equation}
where $P_{\hat U}=I_n-\frac{1}{2}\hat U\hat U^T$. It is a second-order retraction under the canonical metric. Similar to the Cayley retraction on the symplectic Stiefel manifold \cite{BendokatZimmermann:2021}, and which was also noted in \cite{Tiep:2025}, we can evaluate \eqref{eq:cayley_retraction} as follows
\begin{equation*}
    \mcR_{\hat U}(\xi)=-\hat U+(\hat U_\perp B+2\hat U)\qty(\frac{1}{4} B^TB-\frac{1}{2}A+I_p)^{-1},
    \quad \xi=\hat UA+\hat U_\perp B,
\end{equation*}
which only requires inverting a ($p\times p$) matrix. The Cayley retraction has a closed-form inverse $\mcR^{-1}_U:\St(n,p)\to T_{\hat U}\St(n,p)$
\begin{equation}\label{eq:inv_Cay_ret}
    \mcR_{\hat U}^{-1}(U)=2{\hat U}F(U)^T+2UF(U)-2{\hat U}, \quad F(U)=(I+{\hat U}^TU)^{-1}. 
\end{equation}
If $\hat U=E,\xi=\begin{bmatrix}
    A\\
    B
\end{bmatrix}$ and $U=\begin{bmatrix}
    U_1\\
    U_2
\end{bmatrix}$, then
\begin{align*}
   \mcR_{E}(\xi)&=\Cay\qty(\frac{1}{2}\begin{bmatrix}
       A&-B^T\\
       B&0
   \end{bmatrix})E, \\ 
   \mcR^{-1}_E(U)&=2\begin{bmatrix}
       (I_p+U_1)^{-T} - (I_p+U_1)^{-1}\\
       U_2(I_p+U_1)^{-1}
   \end{bmatrix}.
\end{align*}

The matrix $(I_p+U_1)^{-T} - (I_p+U_1)^{-1}$ is skew-symmetric. If $\eta\in T_{\hat U}\St(n,p)$ and $\hat Q=\begin{bmatrix}
    \hat U&\hat U_\perp
\end{bmatrix}$, then $R_{\hat U}(\eta)=\hat Q\mcR_E(\hat Q^T\eta)$ and $\mcR^{-1}_{\hat U}(U)=\hat Q\mcR_{E}^{-1}(\hat Q^T U)$, which, similar to the formulas for the polar-light retraction in \eqref{eq:varphi_Q}, provides an explicit relationship between points on $\St(n,p)$ and local coordinates in $\Skew(p)\times \R^{(n-p)\times p}$, computed via the Cayley retraction. 

To the best of our knowledge, the polar-light retraction and the canonical Cayley retraction are the only existing second-order retractions for the Stiefel manifold that admit an inverse in closed form.

\subsection{Conditioning and the matrix polar decomposition}
\label{sec:cond_matrix_polar}
For a differentiable map between vector spaces $F:X\to Y$, the absolute condition number (or simply the conditioning) of $F$ at $x\in X$ is the operator norm
\begin{equation}\label{eq:def_abs_cond}
    \|dF_x\|=\sup_{v\in X}\frac{\|dF_x[v]\|_{Y}}{\|v\|_{X}},
\end{equation}
where $\|\cdot \|_{X},\|\cdot \|_{Y}$ are norms defined on $X$ and $Y$ respectively \cite[p. 56]
{Higham:2008:FM}. 

Let $A\in \R^{m\times m}$ be nonsingular. Then it has a unique polar decomposition $A=UH$ with $U=A(A^TA)^{-\frac12}\in SO(m)$ and $H=(A^TA)^{\frac{1}{2}}\in SPD(m)$ \cite[Theorem 8.1]{Higham:2008:FM}. If we let $A+ \Delta A=\tilde U\tilde H$ be a perturbation of $A=UH$, then 
\begin{align}
    \|H-\tilde H\|_F&\leq \sqrt{2}\|A-\tilde A\|_F\label{eq:dH_boubd},\\
    \|U-\tilde U\|&\leq \frac{2}{\sigma_m+\tilde \sigma_m}\|A-\tilde A\| \label{eq:UUtilde_bound},
\end{align}
where $\sigma_m$ and $\tilde \sigma_m$ are the smallest singular values of $A$ and $\tilde A$, respectively, and $\|\cdot\|$ is any unitarily invariant norm \cite[Theorems 8.9 and 8.10]{Higham:2008:FM}. By the mean-value theorem and the definition of the condition number, computing $H$ is seen to be well-conditioned, with its condition number bounded by $\sqrt{2}$. Computing the $U$-factor becomes ill-conditioned as $A$ approaches rank-deficiency.

\section{Error analysis for data processing and interpolation error bounds}\label{sec:err_analysis}

Given data $U^{(i)}=F(t_i)$ from a function $F:I\to \St(n,p), t\mapsto F(t)$ sampled at $t_1<t_2<\dots <t_m$, we consider the construction of a manifold interpolant $\tilde F:I\to \St(n,p)$ so that $\tilde F(t_i)=F(t_i)$, for all $i$. The generic process is as follows:
\begin{enumerate}
    \item  Map the manifold data to their local coordinate images using a fixed coordinate chart. The coordinate images are situated in a vector space.
    \item Interpolate the local coordinate images using any Euclidean interpolation method. 
    \item Map back to the manifold using the parameterization corresponding to the chosen coordinate chart. 
\end{enumerate}
In this section, we first consider errors associated with using the polar-light retraction \eqref{eq:pl_retraction} and its inverse \eqref{eq:pl_inverse}. 
While retractions are defined as maps between the tangent space $T_{\hat U}St(n,p)$ and $\St(n,p)$, they can always be considered as parameterizations on a Euclidean coordinate domain,
because each tangent space is isomorphic to the Euclidean $\R^d$ of the same dimension. For the Stiefel manifold, a tangent vector $\xi = \hat U A + \hat U_\bot B$ has Euclidean coordinates
$\begin{bmatrix}
    A\\
    B
\end{bmatrix}\in \Skew(p) \times \R^{(n-p)\times p}.$
Our analysis will be based on the interpretation of the retractions as maps from coordinate domain to manifold, i.~e., as local parameterizations.

The overall goal of the section is to derive interpolation error bounds of the form
\begin{equation*}
    \dist(F(t),\tilde F(t))\leq Kh^k,
\end{equation*}
with explicit constant $K$. Here $\tilde F:I\to \St(n,p)$ is an interpolant of $F$ obtained from Steps 1--3 above, and $h=\max_i |t_{i+1}-t_i|$ is the maximal step size. $\tilde F$ depends on the choice of parameterization, and so does the constant $K$.

\subsection{Mapping to polar-light coordinates}\label{sec:to_loc_coords}

We will consider the process of mapping manifold data to their polar-light coordinate images, facilitated by the map \eqref{eq:pl_inverse}, \eqref{eq:psi_Q}.

In order to compute the local coordinate matrix of a point $U=\begin{bmatrix}
    U_1\\
    U_2
\end{bmatrix}\in \St(n,p)$, we compute the polar decomposition of the upper ($p\times p$) block $U_1=RH, R\in O(p), H\in SPD(p)$. If $U_1$ is rank-deficient, the polar factor $R=U_1(U_1^TU_1)^{-\frac12}$ is not unique, $H=(U_1^TU_1)^{\frac{1}{2}}$ is positive semidefinite, and so the coordinate chart $\psi_E$ is not well-defined. It follows from \eqref{eq:UUtilde_bound} that the stability of computing the polar factor $R$ is governed by the smallest singular value of $U_1$, which may even be rank-deficient. 
While there is no way to improve the formal condition number, one can always apply a permutation matrix $P$ to $U$ such that $PU=\hat U$ has a non-singular, possibly even well-conditioned upper ($p\times p$) block. The matrix $P$ can be computed using a maximum-volume scheme \cite{Goreinov:1997}. In this work, however, we will use the block-Householder QR decomposition as in \cite{jensen2025maxvol} (see the upcoming \Cref{def:Phi}).

Computing the factor $H\in SPD(p)$ in the polar decomposition has absolute condition $\|d H_{U_1}\|_F\leq \sqrt{2}$ \cite[Theorem 8.8]{Higham:2008:FM}, leading to the following Lemma.
\begin{lemma}\label{lem:condition_invSPD_factor}
    Let $U=\begin{bmatrix}
        U_1\\
        U_2
    \end{bmatrix}$ with $U_1$ regular. Then the map $J(U_1)=(U_1^TU_1)^{-\frac12}$ has the condition 
    \begin{equation}\label{eq:condition_invSPD_factor}
        \|dJ_{U_1}\|_F\leq \sqrt{2}\frac{1}{\sigma_p^2},
    \end{equation}
    where $\sigma_p$ is the smallest singular value of $U_1$.
\end{lemma}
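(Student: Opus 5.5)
Write $J = G\circ H$ with $H(U_1)=(U_1^TU_1)^{\frac12}$, the symmetric polar factor of $U_1$, and $G(M)=M^{-1}$ on $SPD(p)$. By the chain rule, $dJ_{U_1}[E] = dG_{H}\big[dH_{U_1}[E]\big]$, so
\[
\|dJ_{U_1}\|_F \le \|dG_{H}\|_F\,\|dH_{U_1}\|_F .
\]
For the second factor we invoke the bound $\|dH_{U_1}\|_F\le\sqrt2$ quoted just before the lemma from \cite[Theorem 8.8]{Higham:2008:FM}. Alternatively, it follows from \eqref{eq:dH_boubd} by dividing by $\|E\|_F$ and letting $E\to0$.

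For the first factor, differentiate $M M^{-1}=I$ to get $dG_M[X] = -M^{-1}XM^{-1}$. Submultiplicativity of the Frobenius norm together with the spectral norm gives $\|M^{-1}XM^{-1}\|_F\le \|M^{-1}\|_2^2\,\|X\|_F$. Since $H$ is symmetric positive definite with eigenvalues equal to the singular values of $U_1$, we have $\|H^{-1}\|_2 = 1/\sigma_p$. Hence $\|dG_H\|_F\le 1/\sigma_p^2$.

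Combining the two factors gives $\|dJ_{U_1}\|_F\le \sqrt2/\sigma_p^2$, which is \eqref{eq:condition_invSPD_factor}. The map $J$ is smooth wherever $U_1$ is regular, since $H$ is then positive definite and inversion is smooth there, so the chain rule applies.

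The only step needing care is the operator-norm convention. The paper writes $\|\cdot\|_F$ for the operator norm induced by the Frobenius norm on both domain and codomain, as in \eqref{eq:def_abs_cond}. We must therefore use the mixed inequality $\|AXB\|_F\le\|A\|_2\|X\|_F\|B\|_2$ rather than the cruder $\|A\|_F\|X\|_F\|B\|_F$, which would introduce extra factors of $p$. Beyond this, the argument is routine.
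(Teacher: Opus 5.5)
Your proof is correct and follows essentially the same route as the paper: you write $J$ as the inverse of the symmetric polar factor $H(U_1)=(U_1^TU_1)^{\frac12}$, differentiate the inversion to get $-H^{-1}\,dH_{U_1}[\cdot\,]\,H^{-1}$, and combine $\|ABC\|_F\le\|A\|_2\|B\|_F\|C\|_2$ with the bound $\|dH_{U_1}\|_F\le\sqrt2$ and $\|H^{-1}\|_2=1/\sigma_p$. The only difference is cosmetic: you first split the estimate as $\|dG_H\|_F\,\|dH_{U_1}\|_F$ and then bound each factor, whereas the paper bounds the composed expression directly.
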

\begin{proof}
    Define $H(U_1) = (U_1^TU_1)^{\frac12}$ so that $J(U_1) = (H(U_1))^{-1}$.
    Using the chain rule and the general inequality  $\|ABC\|_F\leq \|A\|_2\|B\|_F\|C\|_2$ \cite[Corollary 3.5.10]{horn1991matrix}, we obtain for the directional derivative in direction $C$ with $\|C\|_F=1$
    \begin{align*}
        \|d J_{U_1}[C]\|_F&=\|((U_1^TU_1))^{-\frac{1}{2}}dH_{U_1}[C](U_1^TU_1)^{-\frac{1}{2}}\|_F\\
        &\leq \sqrt{2}\|(U_1^TU_1)^{-\frac{1}{2}}\|_2^2
        =\sqrt{2}\frac{1}{\sigma_p^2}.
    \end{align*}
    
\end{proof}

To obtain the skew-symmetric factor $A$ in \eqref{eq:psi_Q}, we must compute the (principal) matrix logarithm of the polar factor $R=U_1(U_1^TU_1)^{-\frac{1}{2}}$, which requires that $R$ does not have $-1$ as eigenvalue. Under the Frobenious-norm, the absolute condition of the matrix logarithm is 
\begin{equation*}
    \|d(\log_m)_M\|_F=\max\left\{1,\max_{\stackrel{\lambda,\mu\in \Lambda(M)}{\lambda\neq \mu}}\frac{|\log(\lambda)-\log(\mu)|}{|\lambda-\mu|}\right\},
\end{equation*}
where $\Lambda(M)$ is the spectrum of $M$ \cite[Equation 11.11]{Higham:2008:FM}.
In other words, the matrix logarithm is well-conditioned, if the polar factor $R$ does not feature a pair of eigenvalues which are close to, but situated on opposite sides of, the negative real axis in the complex plane \cite[pp. 273]{Higham:2008:FM}. 

To keep the bound \eqref{eq:condition_invSPD_factor} small, it is advantageous to move Stiefel data sets into neighborhoods of the canonical point $E$, and use $E$ as center for the local coordinate chart. To this end, we use the following group action, which maps a neighborhood of a designated point $\hat U\in \St(n,p)$ to a neighborhood around $E$.
\begin{definition}\label{def:Phi}
    Fix a point $\hat U\in \St(n,p)$. Construct a matrix $Q\in O(n)$ so that $Q^T\hat U=\begin{bmatrix}
        \diag(\pm 1,\dots,\pm 1)\\
        0
    \end{bmatrix}$, and take $S=E^TQ^T\hat U$. Then $\Phi_{(Q,S)}:\St(n,p)\to \St(n,p), U\mapsto Q^TUS$ is a bijection so that $\Phi_{(Q,S)}(\hat U)=E$. 
\end{definition}
In order to construct the matrix $Q$ in \Cref{def:Phi} we can proceed by applying the Householder QR decomposition to $\hat U$. This yields $R=Q^T\hat U$, where the upper ($p\times p$) block of $R$ is $\diag(\pm 1,\dots,\pm 1)$. 
Using the block representation of the Householder QR, the mapping can be evaluated in $\mathcal{O}(np^2)$ FLOPS. For details, see \cite[p. 12]{jensen2025maxvol}. 

The following result is immediate.
\begin{proposition}\label{prop:isometry}
    Endow $\St(n,p)$ with the Euclidean metric. Then $\Phi_{(Q,S)}:\St(n,p)\to \St(n,p)$ is a local isometry.
\end{proposition}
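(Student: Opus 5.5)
The plan is to show that $\Phi_{(Q,S)}$ is the restriction to $\St(n,p)$ of a linear map on $\R^{n\times p}$ that preserves the Frobenius inner product. The first step is to check that $\Phi_{(Q,S)}$ maps $\St(n,p)$ into itself and is a bijection. The matrix $S=E^TQ^T\hat U$ is the upper $(p\times p)$ block of $Q^T\hat U$, namely $\diag(\pm1,\dots,\pm1)$. Hence $S\in O(p)$, $S^T=S$ and $S^2=I_p$. For $U\in\St(n,p)$ we get
\[
(Q^TUS)^T(Q^TUS)=S^TU^TQQ^TUS=S^TS=I_p,
\]
so $\Phi_{(Q,S)}$ maps into $\St(n,p)$. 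Its inverse is $V\mapsto QVS^T$, which is of the same form, so $\Phi_{(Q,S)}$ is a diffeomorphism of $\St(n,p)$. It also satisfies $\Phi_{(Q,S)}(\hat U)=Q^T\hat US=ES^2=E$, as claimed in \Cref{def:Phi}.

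The second step concerns the differential. $\Phi_{(Q,S)}$ is the restriction of the linear map $L:\R^{n\times p}\to\R^{n\times p}$, $X\mapsto Q^TXS$. Therefore, for $U\in\St(n,p)$ and $\xi\in T_U\St(n,p)$, the differential is $d\Phi_U[\xi]=Q^T\xi S$. One checks directly that this vector lies in $T_{\Phi(U)}\St(n,p)$:
\[
(Q^TUS)^T(Q^T\xi S)+(Q^T\xi S)^T(Q^TUS)=S^T\bigl(U^T\xi+\xi^TU\bigr)S=0 .
\]

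The third step verifies that the Euclidean metric is preserved. For $\xi,\Delta\in T_U\St(n,p)$,
\[
g_e\bigl(d\Phi_U[\xi],d\Phi_U[\Delta]\bigr)=\tr\bigl(S^T\xi^TQQ^T\Delta S\bigr)=\tr\bigl(\xi^T\Delta SS^T\bigr)=\tr(\xi^T\Delta)=g_e(\xi,\Delta).
\]
This uses the cyclic invariance of the trace together with $QQ^T=I_n$ and $SS^T=I_p$. So $\Phi_{(Q,S)}$ is a local isometry, and being a bijective diffeomorphism it is in fact a global isometry.

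I do not expect a real obstacle. The only point needing care is confirming that $S$ is orthogonal, which follows from the sign structure produced by the Householder QR construction in \Cref{def:Phi}. The same computation would also show invariance of the canonical metric, since $(Q^TUS)(Q^TUS)^T=Q^TUU^TQ$.
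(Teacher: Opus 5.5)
Your proof is correct and is essentially the paper's argument written out. The paper only invokes the definition of a (local) isometry and a textbook proposition from Lee. You instead verify directly that $\Phi_{(Q,S)}$ is the restriction of the Frobenius-norm-preserving linear map $X\mapsto Q^TXS$, so its differential pulls $g_e$ back to itself; your extra remarks that it is in fact a global isometry and also preserves the canonical metric are likewise correct.
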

\begin{proof}
    By the definition of an isometry \cite[p. 12]{Lee2018riemannian}, the result follows from \cite[Proposition 2.51]{Lee2018riemannian}.
\end{proof}
Given data $U^{(1)},\dots, U^{(m)}$ in a neighborhood of $\hat U$, we can map them bijectively via $\Phi_{(Q,S)}$ from \Cref{def:Phi} to a neighborhood of $E$, so that pairwise distances are preserved (under the Euclidean metric). 

If the neighborhood is sufficiently small, we can also guarantee that the polar factor associated with the upper ($p\times p$)-block of each point has no eigenvalue equal to $-1$, and the smallest singular value of each upper block is bounded away from zero. In \Cref{sec:well_defined_inverse} we construct an explicit normal neighborhood of any point, in which \eqref{eq:psi_Q} is guaranteed to be well-defined.

\subsection{From polar-light coordinates to the manifold} 
\label{sec:to_manifold}
Given local coordinate matrices
$X^{(1)},\dots,X^{(m)}\in\Skew(p)\times \R^{(n-p)\times p}$ lying in a neighborhood of $0$ so that $\psi_E(U^{(j)})=X^{(j)}$, we now consider how distances are propagated when the $X^{(j)}$ are mapped  back to $\St(n,p)$ via \eqref{eq:varphi_Q}. For a linear map $L:\mathcal{V}\to \mathcal{W}$, where $\mathcal{V}$ and $\mathcal{W}$ are both matrix vector spaces, we consider the induced Frobenius operator norm
\begin{equation}\label{eq:ind_F_operator_norm}
    \|L\|_F=\sup_{M\neq 0}\frac{\|LM\|_F}{\|M\|_F}=\max_{\|M\|_F=1}\|LM\|_F,
\end{equation}
which is unitarily invariant. For any unitarily invariant matrix norm, it holds that $\|ABC\|\leq \|A\|_2\|B\|\|C\|_2$ \cite[Corollary 3.5.10]{horn1991matrix}, and any of the two norms on the right-hand side can be the 2-norm. Below, we collect some useful results needed for our analysis. 
\begin{lemma}\label{lem:prelim_res}
    Assume that $(n-p)\geq p$.
    Let $A\in\Skew(p)$ and $B\in \R^{(n-p)\times p}$. Define $S(B)=(I+B^TB)^{-\frac{1}{2}}$ and $G(B)=(I+B^TB)^{\frac{1}{2}}$, and let $B=\Gamma \begin{bmatrix} \Sigma\\ 0\end{bmatrix} V^T$ be a full singular value decomposition with $\Gamma\in O(n-p), V\in O(p)$ and $\Sigma=\diag(\sigma_1, \ldots, \sigma_p)\in\R^{p\times p}$. Then
    \begin{enumerate}
        \item $\|dG_B\|_F\leq 
        \max\left\{
    \frac{\sigma_1 }{\sqrt{1+\sigma_1^2}},
    \max_{i\neq j, i,j\leq p} \frac{\sqrt{2}\sqrt{\sigma_i^2 + \sigma_j^2}}{\sqrt{1+\sigma_i^2} + \sqrt{1+\sigma_j^2}}
    \right\} \leq \sqrt{2}$, 
        \item $\|S(B)\|_2=\frac{1}{\sqrt{1+\sigma_p^2}}\leq 1$,
        \item $\|d(\exp_m)_A\|_F\leq 1$.
    \end{enumerate}
\end{lemma}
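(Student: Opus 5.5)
Item 2 is the easiest, so I will do it first. With the given full SVD, $B^TB = V\Sigma^2V^T$. Hence $S(B)=V(I+\Sigma^2)^{-1/2}V^T$, and its singular values are $(1+\sigma_i^2)^{-1/2}$. The largest of these corresponds to the smallest singular value $\sigma_p$, which gives $\|S(B)\|_2=(1+\sigma_p^2)^{-1/2}\le 1$.

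For item 1, I write $G(B)=f(B^TB)$ with $f(x)=\sqrt{1+x}$, so $G(B)=V\,g(\Sigma)V^T$ with $g(\sigma)=\sqrt{1+\sigma^2}$. I will compute the Fréchet derivative by the Daleckii--Krein approach. Since the Frobenius norm is unitarily invariant and $d(B^TB)[C]=C^TB+B^TC$, I substitute $\tilde C=\Gamma^TCV$ and reduce to $B=\begin{bmatrix}\Sigma\\0\end{bmatrix}$, $V=I$. Splitting $\tilde C=\begin{bmatrix}C_1\\C_2\end{bmatrix}$, the derivative of $B^TB$ becomes $D:=C_1^T\Sigma+\Sigma C_1$, and $C_2$ drops out. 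Next I use $dG[C]=L$ with $L$ solving the Sylvester equation $G L+LG=D$, which follows from differentiating $G^2=I+B^TB$. Entrywise this gives
\[
L_{ij}=\frac{\sigma_j c_{ij}+\sigma_i c_{ji}}{g_i+g_j},\quad g_i=\sqrt{1+\sigma_i^2},
\]
with $c_{ij}$ the entries of $C_1$. I then maximize $\|L\|_F^2$ over $\|C_1\|_F=1$. The problem decouples into the diagonal entries and the $2\times2$ pairs $(c_{ij},c_{ji})$, $i<j$. A diagonal entry contributes $(2\sigma_ic_{ii}/(2g_i))^2$, with maximal gain $\sigma_i/g_i\le\sigma_1/g_1$ because $\sigma/\sqrt{1+\sigma^2}$ is increasing. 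A pair contributes $2\,(\sigma_jc_{ij}+\sigma_ic_{ji})^2/(g_i+g_j)^2$, since $L$ is symmetric. By Cauchy--Schwarz this is at most $2(\sigma_i^2+\sigma_j^2)(c_{ij}^2+c_{ji}^2)/(g_i+g_j)^2$. Taking the maximum over blocks gives the stated bound. For the final $\le\sqrt2$, I note that $\sigma_1/g_1<1$ and that $\sqrt{\sigma_i^2+\sigma_j^2}\le g_i+g_j$, since $(g_i+g_j)^2\ge g_i^2+g_j^2=2+\sigma_i^2+\sigma_j^2$. The hypothesis $n-p\ge p$ only ensures that the $\begin{bmatrix}\Sigma\\0\end{bmatrix}$ form is valid.

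For item 3, I use that $A$ is skew-symmetric, hence normal, with purely imaginary eigenvalues $i\theta_k$, and diagonalize it unitarily. By unitary invariance, the Daleckii--Krein formula gives that $d(\exp_m)_A$ acts entrywise in the eigenbasis by the divided differences $(e^{i\theta_k}-e^{i\theta_l})/(i\theta_k-i\theta_l)$, or $e^{i\theta_k}$ when $\theta_k=\theta_l$. Each of these has modulus at most $1$, because $|e^{ia}-e^{ib}|=2|\sin((a-b)/2)|\le|a-b|$. Therefore the operator norm is at most $1$. Alternatively I can use $d(\exp_m)_A[C]=\int_0^1 e^{sA}Ce^{(1-s)A}\,ds$ together with the orthogonality of $e^{sA}$, which avoids eigen-decompositions altogether.

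I expect the main obstacle to be item 1: the decoupled maximization together with the symmetric factor $2$ for the off-diagonal pairs. I will check carefully that the block decomposition of $\|L\|_F^2$ is exact.
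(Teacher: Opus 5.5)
Your proposal is correct and follows the paper's route. For item 1 the paper likewise solves the Sylvester equation $G(B)Y+YG(B)=C^TB+B^TC$ in SVD coordinates and reads off the gains of the diagonal entries and of the $2\times 2$ pairs (by vectorizing rather than by your per-block Cauchy--Schwarz, which is equivalent), additionally giving a global $\sqrt2$ bound via the conditioning of the Hermitian polar factor; item 2 is identical, and for item 3 the paper uses the integral representation you list as your alternative.

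One harmless slip: from $D=C_1^T\Sigma+\Sigma C_1$ one gets $L_{ij}=(\sigma_i c_{ij}+\sigma_j c_{ji})/(g_i+g_j)$, with $c_{ij}$ and $c_{ji}$ the other way round from what you wrote. Your Cauchy--Schwarz bound is symmetric under that swap, so the conclusion is unaffected.
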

\begin{proof}
We first address the bound on the differential of $G(B) = \sqrt{I+B^TB}$. With $f(B) = \begin{bmatrix}I\\ B\end{bmatrix}$ and $H(X) = \sqrt{X^TX}$, it holds $G(B) = H(f(B))$. Here, $H$ is again the mapping of matrix to its to symmetric polar factor for which we have the condition bound from 
\cite[Thm 8.8]{Higham:2008:FM}.
For $C$ with $\|C\|_F\neq 0$, 
 \begin{align*}
        \frac{\|DG_B[C]\|_F}{\|C\|_F} =
        \frac{\|DH_{f(B)}[Df_B[C]]\|_F}{\|C\|_F}\leq \| DH_{f(B)}\| _F\frac{\|Df_B[C]\|_F}{\|C\|_F}\\
        \stackrel{\text{\cite[Thm 8.8]{Higham:2008:FM}}}{\leq} \sqrt{2} \frac{\|\begin{bmatrix}0\\ C\end{bmatrix}\|}{\|C\|} = \sqrt{2}.
\end{align*}
In addition to the global bound, we also give an input-specific bound, which is more precise, when the singular values of $B$ are known.
Let $\Phi(B) = I+B^TB$ and $F(M) =\sqrt{M}$ so that $G(B) = (F\circ \Phi)(B)$.
Consider a direction matrix $C\in\R^{(n-p)\times p}$ with $\|C\|_F=1$.
The differential of $G$ at $B$ in direction $C$ is determined by the Sylvester equation
\[
    Y:=DG_B[C]\in \sym(p), \quad G(B)Y+YG(B) = C^TB + B^TC= D\Phi_B[C],
\]
see \cite[p. 134]{Higham:2008:FM}. Change coordinates according to the SVD $B=\Gamma  \begin{bmatrix} \Sigma\\ 0\end{bmatrix}  V^T$.
It holds $G(B) = V\sqrt{I+\Sigma^2}V^T$. Multiplying the Sylvester equation with $V^T$ from the left and $V$ from the right gives
\[
    \sqrt{I+\Sigma^2} V^TYV + V^TYV \sqrt{I+\Sigma^2} 
    = (V^TC^T\Gamma) \begin{bmatrix} \Sigma\\ 0 \end{bmatrix}+ \begin{bmatrix} \Sigma & 0 \end{bmatrix} (\Gamma^TCV).
\]
Introduce $X=V^TYV$, $D = \Gamma^TCV$ and note that $\|X\|_F=\|Y\|_F$, $\|D\|_F=\|C\|_F$.
Entry-wise, $X=X^T$ is specified by
\begin{align*}
    \sqrt{1+\sigma_i^2} X_{ij} + \sqrt{1+\sigma_j^2} X_{ji} = \sigma_j D_{ji} + \sigma_i D_{ij}
    \Leftrightarrow
    X_{ij} = \frac{\sigma_i D_{ij} + \sigma_j D_{ji} }{\sqrt{1+\sigma_i^2} + \sqrt{1+\sigma_j^2}}\\
    \Leftrightarrow
    \begin{cases}
    X_{ii} = \frac{\sigma_i }{\sqrt{1+\sigma_i^2}}D_{ii},\\
    \begin{pmatrix}
            X_{ij}\\
            X_{ji}
        \end{pmatrix} = \frac{1}{\sqrt{1+\sigma_i^2} + \sqrt{1+\sigma_j^2}}
        \begin{pmatrix}
            \sigma_i & \sigma_j\\
            \sigma_i & \sigma_j
        \end{pmatrix}
        \begin{pmatrix}
            D_{ij}\\
            D_{ji}
        \end{pmatrix}, \quad i\neq j\\
    \end{cases} \text{ for } 1\leq i,j\leq p.
\end{align*}
In the coordinates of the SVD of $B$, the differential is the linear map $D\mapsto X(D)$.
It holds $\frac{\|DG_B[C]\|_F}{\|C\|_F} = \frac{\|X(D)\|_F}{\|D\|_F}$.
Vectorizing $D$ in the ordering 
$$
\text{vec}^\Pi(D) = (D_{11}, D_{21}, D_{12},\ldots, D_{p1}, D_{1p},\ldots, ) \in \R^{(n-p)p}
$$
and writing $\delta_i = \frac{\sigma_i }{\sqrt{1+\sigma_i^2}}$, $\gamma_{ij}=\frac{1}{\sqrt{1+\sigma_i^2} + \sqrt{1+\sigma_j^2}}$, the linear map is realized by
\[
    \text{vec}^\Pi(D) \mapsto 
    \begin{pmatrix}[ccc|c]
        \delta_{1} &                                       &   &\\
                   & \gamma_{12}\begin{bmatrix}
                       \sigma_2 & \sigma_1\\
                       \sigma_2 & \sigma_1
                   \end{bmatrix}                           &   &  0\\
                   &                                       &\ddots &
    \end{pmatrix}
        \begin{pmatrix}
        D_{11}\\
        \begin{bmatrix}
                       D_{21}\\
                       D_{12}
        \end{bmatrix} \\
        \vdots\\
        \hline
        D_{p+1,1}\\
         \vdots
    \end{pmatrix}
    = \begin{pmatrix}
        X_{11}\\
        \begin{bmatrix}
                       X_{21}\\
                       X_{12}
        \end{bmatrix} \\
        \vdots
    \end{pmatrix} = \text{vec}^\Pi(X)\in \R^{p^2}.
\]
Let $\mathcal{X}\in\R^{p^2 \times (n-p)p}$ be the operator indicated in the above equation.
It holds $\frac{\|X(D)\|_F}{\|D\|_F}= \frac{\|\mathcal{X} (\text{vec}^\Pi(D))\|_2}{\|\text{vec}^\Pi(D)\|_2}\leq \|\mathcal{X}\|_2= \sqrt{\lambda_{\max}(\mathcal{X}\mathcal{X}^T)}$. Hence
\[
    \|\mathcal{X}\|_2 = \max\left\{
    \begin{array}{ll}
    |\delta_i|= \frac{\sigma_i }{\sqrt{1+\sigma_i^2}},& \quad i=1,\ldots,p\\
    \gamma_{ij}\sqrt{2}\sqrt{\sigma_i^2 + \sigma_j^2}
    = \frac{\sqrt{2}\sqrt{\sigma_i^2 + \sigma_j^2}}{\sqrt{1+\sigma_i^2} + \sqrt{1+\sigma_j^2}}
    , &\quad i\neq j, i,j\leq p
    \end{array}
    \right\}
\]
The diagonal term is increasing in $\sigma_i$ and thus maximal for $\frac{\sigma_1 }{\sqrt{1+\sigma_1^2}}\leq 1$. Because $\sqrt{\sigma_i^2 + \sigma_j^2}< \sqrt{1+\sigma_1^2}+\sqrt{1+\sigma_j^2}$, the factor associated with the off-diagonal terms is bounded by $\sqrt{2}$. In fact, the bound is approached if $\sigma_1\to \infty$ and $B$ is rank-deficient, i.e., $\sigma_p=0$. For $\|B\|_2\leq 1$, one can show that the bound is at most $\frac{1}{\sqrt{2}}$.

    The second claim follows from $\|(I+B^TB)^{-\frac12}\|_2=\|(I+\Sigma^2)^{-\frac{1}{2}}\|_2$. For the final claim take $\|\Delta\|_F=1$. A slight modification in the proof of \cite[Theorem 10.16]{Higham:2008:FM} yields
    \begin{align*}
    \|d(\exp_m)_A[\Delta]\|_F&=\norm{\int_0^1 \exp_m(A(1-s))\Delta \exp_m(As)\dd s }_F \\
    &\leq \|\Delta\|_F\int_0^1 \|\exp_m(A(1-s))\|_2\|\exp_m(As)\|_2\dd s=1,
\end{align*}
where we have used the unitary invariance of the Frobenius norm and that $A\in\Skew(p)$ so $1=\|\exp_m(At)\|_2,\forall t$. 
\end{proof}
\begin{lemma}\label{lem:cond_phi}
    The absolute condition of $\varphi_E$ at  $X= \begin{bmatrix}
        A\\B
    \end{bmatrix}\in \Skew(p)\times \R^{(n-p)\times p}$ in \eqref{eq:varphi_Q} is bounded by
    \begin{equation*}
        \|d (\varphi_E)_X\|_F\leq 3. 
    \end{equation*}
\end{lemma}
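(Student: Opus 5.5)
We compute the differential of $\varphi_E$ directly from its block form in \eqref{eq:varphi_Q} using the product rule, and bound each resulting term with \Cref{lem:prelim_res} and the inequality $\|ABC\|_F\leq\|A\|_2\|B\|_F\|C\|_2$. Write $\varphi_E(X)=M(X)S(B)$ with $M(X)=\begin{bmatrix}\exp_m(A)\\ B\end{bmatrix}$ and $S(B)=(I+B^TB)^{-\frac12}=G(B)^{-1}$. For a direction $\dot X=\begin{bmatrix}\dot A\\ \dot B\end{bmatrix}$ with $\|\dot X\|_F^2=\|\dot A\|_F^2+\|\dot B\|_F^2=1$, we get
\begin{equation*}
d(\varphi_E)_X[\dot X]=\begin{bmatrix} d(\exp_m)_A[\dot A]\\ \dot B\end{bmatrix}S(B) + M(X)\,dS_B[\dot B],
\qquad dS_B[\dot B]=-S(B)\,dG_B[\dot B]\,S(B).
\end{equation*}

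The first term is bounded by $\|S(B)\|_2\,\|(d(\exp_m)_A[\dot A],\dot B)\|_F$. By items 2 and 3 of \Cref{lem:prelim_res}, this is at most $\sqrt{\|\dot A\|_F^2+\|\dot B\|_F^2}\le 1$.

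For the second term, note that $M(X)^TM(X)=\exp_m(A)^T\exp_m(A)+B^TB=I+B^TB$, since $\exp_m(A)\in SO(p)$. Hence $M(X)S(B)=\varphi_E(X)$ has orthonormal columns, so $\|M(X)S(B)\|_2=1$. The second term is therefore $-\varphi_E(X)\,dG_B[\dot B]\,S(B)$. Its Frobenius norm is bounded by $\|\varphi_E(X)\|_2\,\|dG_B\|_F\,\|\dot B\|_F\,\|S(B)\|_2\le \sqrt2$, using item 1 of \Cref{lem:prelim_res}.

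By the triangle inequality, $\|d(\varphi_E)_X\|_F\le 1+\sqrt2\le 3$. The only potential obstacle is the growth of $\|M(X)\|_2=\sqrt{1+\sigma_1^2}$, which is unbounded in $B$. We avoid it by grouping $M(X)$ with one factor of $S(B)$ as above, so that the unbounded factor never appears on its own. The Lyapunov/Sylvester structure of $dG_B$ is already handled by \Cref{lem:prelim_res}, so no further computation is needed. The bound is stated for an arbitrary $X$ and its constant does not depend on $n,p$, so no restriction such as $n-p\geq p$ is essential beyond what \Cref{lem:prelim_res} already assumes.
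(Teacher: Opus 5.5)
Your proof is correct and follows the paper's argument: the same product-rule decomposition with $dS_B[\dot B]=-S(B)\,dG_B[\dot B]\,S(B)$, the same grouping of $\begin{bmatrix}\exp_m(A)\\ B\end{bmatrix}$ with one factor $S(B)$, and the same three items of \Cref{lem:prelim_res}. The only difference is a sharper estimate of that grouped block. You note that it equals $\varphi_E(X)\in\St(n,p)$ and so has spectral norm exactly $1$, whereas the paper bounds it by $\sqrt{\|\exp_m(A)S(B)\|_2^2+\|BS(B)\|_2^2}\le\sqrt2$. Your route therefore yields the constant $1+\sqrt2$ rather than $3$.
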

\begin{proof}
    Let $\Delta=\begin{bmatrix}
        \Delta A\\
        \Delta B
    \end{bmatrix}\in \Skew(p)\times \R^{(n-p)\times p}$ satisfy $\|\Delta\|_F=1$, and take $S(B)=(I+B^TB)^{-\frac{1}{2}}$ and $G(B)=(I+B^TB)^{\frac{1}{2}}$. Then by the product rule and the chain rule 
    \begin{equation}\label{eq:dvarphi}
        d(\varphi_E)_X[\Delta]=\begin{bmatrix}
        d(\exp_m)_A[\Delta A]\\
        \Delta B
    \end{bmatrix}S(B)+\begin{bmatrix}
        \exp_m(A)\\
        B
    \end{bmatrix}dS_B[\Delta B],
    \end{equation}
    where $dS_B[\Delta B]=-S(B)dG_B[\Delta B]S(B)$. 
    Taking norms, applying the triangle inequality, and using properties of the 2-norm and the Frobenius norm, we find
    \begin{align*}
        \|d(\varphi_E)_X[\Delta]\|_F
        \leq& \left\|\begin{bmatrix}
        d(\exp_m)_A[\Delta A]\\
        \Delta B
         \end{bmatrix} \right\|_F \|S(B)\|_2 \\
         & \quad +
        \left\|\begin{bmatrix}
        \exp_m(A) S(B)\\
        BS(B)
        \end{bmatrix} \right\|_2\|dG_B[\Delta B]\|_F\|S(B)\|_2
        \\
        \leq& \Biggl( \sqrt{\|d(\exp_m)_A(\Delta A)\|_F^2 +\|\Delta B\|_F^2}\\
        &+\sqrt{\|\exp_m(A)S(B)\|_2^2+\|BS(B)\|_2^2}\|dG_B[\Delta B]\|_F \Biggr)  \|S(B)\|_2.
    \end{align*}
    From \Cref{lem:prelim_res}: 
    \begin{itemize}
        \item $\|dG_B[\Delta B]\|_F= \|dG_B\|_F \|\Delta B\|_F \leq \sqrt{2}$.
        \item $\|S(B)\|_2=\frac{1}{\sqrt{1+\sigma_p^2}}\leq 1$.
        \item $\|d(\exp_m)_A[\Delta A]\|_F^2\leq \|\Delta A\|_F^2$.
    \end{itemize}
    Moreover, 
    $\|BS(B)\|_2=\max_i\qty{\frac{\sigma_i}{\sqrt{1+\sigma_i^2}}}= \frac{\sigma_1}{\sqrt{1+\sigma_1^2}}\leq 1$
    and $\|\exp_m(A)S(B)\|_2\leq 1$ as $\exp_m(A)\in SO(p)$.
    Hence, as $1=\|\Delta\|_F=\sqrt{\|\Delta A\|_F^2+\|\Delta B\|_F^2}$ 
    \begin{align*}
        \|d(\varphi_E)_X[\Delta]\|_F&\leq 
        \biggl(\sqrt{\|\Delta A\|^2_F+\|\Delta B\|_F^2}+2
        \biggr) \leq 3\\
    \end{align*}
    completing the proof. 
\end{proof}

Applying the mean-value inequality to two local coordinate matrices $X,Y\in \Skew(p)\times \R^{(n-p)\times p}$ yields
\begin{equation}\label{eq:varphi_local_Lipschitz}
    \|\varphi_E(X)-\varphi_E(Y)\|_F\leq 3\cdot \|X-Y\|_F.
\end{equation}
To relate the Frobenius norm $\|\varphi_E(X)-\varphi_E(Y)\|_F$ to the Riemannian distance under the Euclidean metric $\dist_E(\varphi_E(X),\varphi_E(Y))$ we can use that the maximal Fr\'enet curvature of the Stiefel manifold, when endowed with the Euclidean metric, is $\hat \kappa=1$ \cite[p. 305]{zimmermann2025injectivity}, which together with \cite[Property I]{Attali2007} yields the bound
\begin{equation}\label{eq:dist_and_F_norm_comparison}
        \dist_E(\varphi_E(X),\varphi_E(Y))\leq 2\arcsin\qty(\frac{\|\varphi_E(X)-\varphi_E(Y)\|_F}{2}).
    \end{equation}
Alternatively, assuming $n>2p$, we can apply \cite[Theorem 7.1]{mataigne2026bounds} to obtain the same result. The bound of \cite[Theorem 7.1]{mataigne2026bounds} does not assume knowledge of the maximal Fr\'enet curvature, and it can be applied whenever $\St(n,p)$ is endowed with any $\beta$-metric.

We can now state the main result of this section. 
\begin{theorem}\label{thm:main_theorem}
    Let $X, Y\in\Skew(p)\times \R^{(n-p)\times p}$. If $\|X-Y\|_F<\tfrac{2}{3}$ it holds that 
    \begin{equation*}
        \dist_E(\varphi_E(X),\varphi_E(Y))\leq  2\arcsin(\tfrac{3}{2}\|X-Y\|_F).
    \end{equation*}
\end{theorem}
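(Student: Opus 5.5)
The proof combines the two estimates established just before the statement: the Lipschitz bound \eqref{eq:varphi_local_Lipschitz} for the parameterization $\varphi_E$, and the chord-to-arc comparison \eqref{eq:dist_and_F_norm_comparison}. The idea is to bound the Riemannian distance first by the Euclidean chord length, and then bound the chord by three times the coordinate distance.

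First, I recall that the coordinate domain $\Skew(p)\times\R^{(n-p)\times p}$ is a vector space, so the segment $t\mapsto Y+t(X-Y)$ stays inside it. \Cref{lem:cond_phi} bounds $\|d(\varphi_E)_Z\|_F\le 3$ uniformly for every $Z$ in this space. The mean-value inequality applied along the segment therefore gives
\[
\|\varphi_E(X)-\varphi_E(Y)\|_F\le 3\|X-Y\|_F.
\]
This is exactly \eqref{eq:varphi_local_Lipschitz}. The only thing to verify is that $\varphi_E$ is smooth on the whole domain. This holds because $I+B^TB$ is always SPD, so its inverse square root is defined everywhere.

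Second, I apply \eqref{eq:dist_and_F_norm_comparison}. It gives
\[
\dist_E(\varphi_E(X),\varphi_E(Y))\le 2\arcsin\bigl(\tfrac12\|\varphi_E(X)-\varphi_E(Y)\|_F\bigr).
\]
The function $\arcsin$ is monotonically increasing on $[-1,1]$. The hypothesis $\|X-Y\|_F<\tfrac23$ gives $\tfrac32\|X-Y\|_F<1$, so the argument $\tfrac32\|X-Y\|_F$ lies in the domain of $\arcsin$. The chord bound $\tfrac12\|\varphi_E(X)-\varphi_E(Y)\|_F\le\tfrac32\|X-Y\|_F<1$ shows the original argument is also admissible. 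Monotonicity then yields $2\arcsin(\tfrac12\|\varphi_E(X)-\varphi_E(Y)\|_F)\le 2\arcsin(\tfrac32\|X-Y\|_F)$, and chaining the two inequalities completes the argument.

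The main obstacle is not the chaining itself but making sure \eqref{eq:dist_and_F_norm_comparison} applies. The bound from \cite[Property I]{Attali2007}, using maximal Fr\'enet curvature $\hat\kappa=1$, requires the chord length $\|\varphi_E(X)-\varphi_E(Y)\|_F$ to be below $2/\hat\kappa=2$. This follows from the chord bound above, since $\tfrac12\|\varphi_E(X)-\varphi_E(Y)\|_F\le\tfrac32\|X-Y\|_F<1$ gives $\|\varphi_E(X)-\varphi_E(Y)\|_F<2$. So the hypothesis $\|X-Y\|_F<\tfrac23$ is precisely what makes both the $\arcsin$ and the curvature comparison valid, and I would state this explicitly in the proof.
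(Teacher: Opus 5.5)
Your proposal is correct and is essentially the paper's proof. The paper likewise chains the Lipschitz bound \eqref{eq:varphi_local_Lipschitz} (the mean-value inequality plus \Cref{lem:cond_phi}) with the chord--arc comparison \eqref{eq:dist_and_F_norm_comparison} and uses monotonicity of $\arcsin$, while leaving implicit the checks you make explicit: that $\varphi_E$ is smooth on the whole convex coordinate space, and that $\tfrac12\|\varphi_E(X)-\varphi_E(Y)\|_F\le\tfrac32\|X-Y\|_F<1$ so both the $\arcsin$ and the curvature-based comparison apply.
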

\begin{proof}
    The result follows by combining \eqref{eq:varphi_local_Lipschitz} and \eqref{eq:dist_and_F_norm_comparison}.
\end{proof}
We are now in a position where we can discuss interpolation errors. Given data $U^{(i)}=F(t_i)$ of an unknown (differentiable) function $F$ with $h=\max_i|t_{i+1}-t_i|$, map the data to a neighborhood around $E$ by constructing and applying a suitable isometric group action $\Phi_{(Q,S)}$ according to \Cref{def:Phi}, and map to their local coordinate matrices $X^{(i)}\in \Skew(p)\times \R^{(n-p)\times p}$ via \eqref{eq:psi_Q}. Let $\gamma:I\to \Skew(p)\times \R^{(n-p)\times p}$ be an interpolant such that $X^{(i)}=\gamma(t_i)$ for all $i$, constructed according to any Euclidean interpolation scheme in the tangent space. The corresponding manifold interpolant is 
\begin{equation}\label{eq:manifold_interpol_PL}
    \tilde F(t)=\Phi^{-1}_{(Q,S)}(\varphi_E(\gamma(t))),
\end{equation}
which indeed satisfies $\tilde F(t_i)=F(t_i)$.  
\begin{corollary}
    Let $F(t^*)=U$ and $\tilde F(t^*)=\tilde U$ be respectively true and interpolated data on $\St(n,p)$ situated in a common neighborhood, with local coordinate matrices $X$ and $\tilde X$. Then the interpolation error on $\St(n,p)$ is bounded by 
    \begin{align*}
        \dist_E(F(t^*),\tilde F(t^*))&\leq 2\arcsin((\tfrac{3}{2})\|X-\tilde X\|_F)\\
        &=3\|X-\tilde X\|_F+\mathcal{O}(\|X-\tilde X\|_F^3).
    \end{align*}
\end{corollary}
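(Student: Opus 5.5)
The plan is to reduce the corollary to \Cref{thm:main_theorem} by removing the isometric pre- and post-processing, and then expand $\arcsin$ in a Taylor series.

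\textbf{Step 1: reduction to the chart at $E$.} The true point is $U=F(t^*)$ and the interpolant is $\tilde U=\tilde F(t^*)=\Phi^{-1}_{(Q,S)}(\varphi_E(\tilde X))$ with $\tilde X=\gamma(t^*)$. Here $X=\psi_E(\Phi_{(Q,S)}(U))$ is the local coordinate matrix of the true data point. Since both points lie in the common neighborhood where $\psi_E$ is a well-defined inverse of $\varphi_E$, we get $\Phi_{(Q,S)}(U)=\varphi_E(X)$. The map $\Phi_{(Q,S)}$ is $U\mapsto Q^TUS$ with $Q\in O(n)$ and $S$ a diagonal sign matrix, so it is a bijective isometry of $\St(n,p)$ under the Euclidean metric by \Cref{prop:isometry}; the same holds for its inverse. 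Hence
\[
\dist_E(U,\tilde U)=\dist_E\bigl(\Phi_{(Q,S)}(U),\Phi_{(Q,S)}(\tilde U)\bigr)=\dist_E\bigl(\varphi_E(X),\varphi_E(\tilde X)\bigr).
\]
The one point to check is that a local isometry which is also a global diffeomorphism preserves Riemannian distance. This holds because it maps piecewise smooth curves to curves of equal length, bijectively.

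\textbf{Step 2: apply \Cref{thm:main_theorem}.} Under the standing hypothesis $\|X-\tilde X\|_F<\tfrac23$, which is implicit in the bound being well-defined, the theorem yields $\dist_E(\varphi_E(X),\varphi_E(\tilde X))\le 2\arcsin(\tfrac32\|X-\tilde X\|_F)$. This gives the first inequality.

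\textbf{Step 3: asymptotic form.} Use the odd Taylor series $\arcsin(x)=x+\tfrac{x^3}{6}+\mathcal O(x^5)$, valid for $|x|<1$. With $x=\tfrac32\|X-\tilde X\|_F$ this gives $2\arcsin(x)=3\|X-\tilde X\|_F+\tfrac{9}{8}\|X-\tilde X\|_F^3+\dots$, which is the stated expansion.

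The main obstacle is not computational. It is making precise the assumption that $X=\psi_E(\Phi_{(Q,S)}(F(t^*)))$ is a genuine preimage under $\varphi_E$, i.e.\ that the common neighborhood lies in the domain where $\psi_E\circ\varphi_E$ and $\varphi_E\circ\psi_E$ are identities. I would state this explicitly as the meaning of ``situated in a common neighborhood'' and defer its verification to the normal neighborhoods constructed in \Cref{sec:well_defined_inverse}.
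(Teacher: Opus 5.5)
Your proposal is correct and follows the paper's route: the paper simply cites \Cref{thm:main_theorem} together with the Taylor series of $\arcsin$. Your Step 1 only makes explicit what the paper leaves unstated, namely the distance-preserving reduction via $\Phi_{(Q,S)}$ and the hypothesis $\|X-\tilde X\|_F<\tfrac23$. The obstacle you flag is milder than you suggest, since a direct computation gives $\varphi_E(\psi_E(V))=V$ wherever $\psi_E(V)$ is defined, so you only need well-definedness of $\psi_E$ at $\Phi_{(Q,S)}(U)$.
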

\begin{proof}
    This is a direct consequence of \Cref{thm:main_theorem} and the Taylor series of $\arcsin$.
\end{proof}
The above result shows that when passing from local coordinates to the manifold,
the interpolation order is preserved and the constant bounds are at most amplified by a factor of $3$. For example, let $U^{(1)}=U(t_1)$ and $U^{(2)}=U(t_2)$ be data in a common open neighborhood, and assume that $t_2-t_1=h\to 0$. We may transfer the classic Euclidean interpolation error bounds of e.g. Lagrange and Hermite interpolation \cite[Sections 8.1 - 8.5]{Alfio:2007} to obtain manifold interpolation error bounds for $t^*\in (t_1,t_2)$ and $K_L,K_H>0$
\begin{align*}
    \dist_E(U(t^*),\tilde U_L(t^*))&\leq  3K_Lh^2=\mathcal{O}(h^2)\\
    \dist_E(U(t^*),\tilde U_H(t^*))&\leq 3K_Hh^4=\mathcal{O}(h^4),
\end{align*}

where $\tilde U_L:(t_1,t_2)\to \St(n,p)$ and $\tilde U_H:(t_1,t_2)\to \St(n,p)$ are the Lagrange and Hermite interpolants, respectively.

\subsection{The Cayley setting} 
As a rule, it is numerically beneficial in terms of computational effort and numerical stability to approximate the matrix exponential and the matrix logarithm by the Cayley transformations \eqref{eq:Cay_trafo}, \eqref{eq:Cay_trafo_inv}.
Applied to the polar-light retraction, the resulting coordinate chart and parametrization around $E$ are 
\begin{equation}\label{eq:coordinate_cay_chart}
\psi_E^{\textsf{Cay}}(U)=
\psi_E^{\textsf{Cay}}\qty(\begin{bmatrix} U_1\\ U_2\end{bmatrix})=
\begin{bmatrix}
	2\Cay^{-1}\left(U_1(U_1^TU_1)^{-\frac12}\right)\\
	U_2(U_1^TU_1)^{-\frac12}
\end{bmatrix}
=:
\begin{bmatrix}
	A\\
	B
\end{bmatrix}
\end{equation}
\begin{equation}\label{eq:param_cay}
    \varphi^{\textsf{Cay}}_E\left(\begin{bmatrix}
        A\\
        B
    \end{bmatrix}\right)=\begin{bmatrix}
        \Cay(\tfrac{1}{2}A)\\
        B
    \end{bmatrix}(I_p+B^TB)^{-\frac12},
\end{equation}
which can be modified so that they are centered around any point $\hat U\in \St(n,p)$ in the same way as for \eqref{eq:psi_Q} and \eqref{eq:varphi_Q}.

$\Cay(A)$ can be computed for any $A\in \Skew(p)$, since $A$ has purely imaginary eigenvalues, which excludes an eigenvalue of $-1$. Letting $H\in \Skew(p)$ we obtain the directional derivatives
\begin{equation}\label{eq:derivative_Cayley}
    d(\Cay)_A[H]=2(I-A)^{-1}H(I-A)^{-1}.
\end{equation}
Similarly, the directional derivative of $\Cay^{-1}$ is
\begin{equation}\label{eq:derivative_Cayley_inv}
    d(\Cay^{-1})_R[P]=2(R+I)^{-1}P(R+I)^{-1}.
\end{equation}
\begin{lemma}\label{lem:cond_cayinv}
    The absolute condition of $\Cay^{-1}:O(p)\to \Skew(p)$ at $R\in O(n)$ satisfies
    \begin{equation*}
        \|d(\Cay^{-1})_R\|_F\leq\frac{2}{(\min_\mu |1+\mu|)^2} 
    \end{equation*}
    where $-1\neq \mu\in S^1\subset \C$ are the eigenvalue of $R$ on the complex unit circle $S^1=\{e^{i\alpha}\mid \alpha\in \R\}$.
\end{lemma}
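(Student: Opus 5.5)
We bound the operator norm directly from the derivative formula \eqref{eq:derivative_Cayley_inv}. Take a direction $P$ with $\|P\|_F=1$. By the inequality $\|ABC\|_F\leq \|A\|_2\|B\|_F\|C\|_2$ from \cite[Corollary 3.5.10]{horn1991matrix}, used repeatedly above, we get
\begin{equation*}
\|d(\Cay^{-1})_R[P]\|_F = 2\|(R+I)^{-1}P(R+I)^{-1}\|_F \leq 2\|(R+I)^{-1}\|_2^2\,\|P\|_F .
\end{equation*}
The claim therefore reduces to computing the spectral norm $\|(R+I)^{-1}\|_2$. As in the statement, this requires that $R$ has no eigenvalue $-1$, so that $R+I$ is invertible.

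The key observation is that $R\in O(p)$ is normal. It is unitarily diagonalizable over $\C$, say $R=W\,\diag(\mu_1,\dots,\mu_p)W^*$ with $W$ unitary and $|\mu_j|=1$. Hence $R+I=W\diag(1+\mu_j)W^*$ is also normal, and its inverse is $W\diag((1+\mu_j)^{-1})W^*$. For a normal matrix the spectral norm equals the spectral radius, so
\begin{equation*}
\|(R+I)^{-1}\|_2=\max_j \frac{1}{|1+\mu_j|}=\frac{1}{\min_\mu|1+\mu|}.
\end{equation*}
Equivalently, one can use the real Schur (block-diagonal rotation) form of $R$. Each $2\times 2$ rotation block by angle $\alpha$ gives $R+I$ a block that is $2\cos(\alpha/2)$ times a rotation, and each eigenvalue $+1$ contributes the factor $2$.

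Substituting into the first bound gives $\|d(\Cay^{-1})_R\|_F\leq 2/(\min_\mu|1+\mu|)^2$. Taking the supremum over unit directions $P$ then yields the bound, with the operator norm as in \eqref{eq:ind_F_operator_norm}. Restricting $P$ to the tangent directions $P=R\Omega$, $\Omega\in\Skew(p)$, can only decrease the supremum, so the bound also holds for the map on $O(p)$.

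There is no genuine obstacle. The one point needing care is the normality argument for $\|(R+I)^{-1}\|_2$, which turns the spectral-norm bound into the eigenvalue expression. One should also state explicitly that excluding $\mu=-1$ is exactly the condition for $\Cay^{-1}$ to be well defined at $R$.
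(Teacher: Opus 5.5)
Your proposal is correct and takes the same route the paper indicates; the paper's proof only says the bound follows as in the proof of Lemma~\ref{lem:condition_invSPD_factor}, namely by applying $\|ABC\|_F\leq\|A\|_2\|B\|_F\|C\|_2$ to the derivative formula \eqref{eq:derivative_Cayley_inv}. Your normality argument giving $\|(R+I)^{-1}\|_2=1/\min_\mu|1+\mu|$ supplies the step the paper leaves implicit, and your remark that restricting to tangent directions $R\Omega$ can only decrease the supremum is a sound way to handle $O(p)$ not being a vector space.
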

\begin{proof}
    The bound is obtained in a similar way as in the proof of \Cref{lem:condition_invSPD_factor}.
\end{proof}
The bound in \Cref{lem:cond_cayinv} attains its minimum whenever all eigenvalues are equal to $1$, i.~e., at $R=I$. If we replace $\log_m$ with $2\Cay^{-1}$ in \Cref{eq:psi_Q}, the discussion of applying the group action in \Cref{def:Phi} in \Cref{sec:to_loc_coords} applies equally here. We now consider the conditioning of the mapping in the reverse direction.  
\begin{lemma}\label{lem:cond_cay_trans}
    The Cayley transform $\Cay:\Skew(p)\to O(p)$ has the absolute condition in the induced Frobenious norm \eqref{eq:ind_F_operator_norm}
    \begin{equation*}
        \|d(\Cay)_A\|_F\leq 2\|(I-A)^{-1}\|_2^2\leq 2 \quad \forall A\in\Skew(p). 
    \end{equation*}
\end{lemma}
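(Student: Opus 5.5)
We start from the derivative formula \eqref{eq:derivative_Cayley}, $d(\Cay)_A[H]=2(I-A)^{-1}H(I-A)^{-1}$. Any skew-symmetric $A$ commutes with $I+A$, so $(I-A)^{-1}$ and $(I+A)$ commute. Differentiating $\Cay(A+tH)=(I-A-tH)^{-1}(I+A+tH)$ gives
\[
(I-A)^{-1}H(I-A)^{-1}(I+A)+(I-A)^{-1}H .
\]
Factoring out $(I-A)^{-1}H(I-A)^{-1}$ on the left leaves $(I+A)+(I-A)=2I$ on the right, which recovers \eqref{eq:derivative_Cayley}.

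Next we take an arbitrary direction $H\in\Skew(p)$ with $\|H\|_F=1$. We apply the inequality $\|ABC\|_F\le \|A\|_2\|B\|_F\|C\|_2$ from \cite[Corollary 3.5.10]{horn1991matrix}, exactly as in the proof of \Cref{lem:condition_invSPD_factor}. This yields
\[
\|d(\Cay)_A[H]\|_F\le 2\|(I-A)^{-1}\|_2^2\|H\|_F .
\]
Taking the supremum over $H$ in the induced Frobenius operator norm \eqref{eq:ind_F_operator_norm} gives the first inequality of the lemma.

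For the second inequality we bound $\|(I-A)^{-1}\|_2$. Since $A\in\Skew(p)$ is a normal matrix, $I-A$ is normal as well, with eigenvalues $1-i\theta_k$ for real $\theta_k$, where $\pm i\theta_k$ are the eigenvalues of $A$. For a normal matrix the spectral norm equals the spectral radius, so
\[
\|(I-A)^{-1}\|_2=\max_k \frac{1}{|1-i\theta_k|}=\max_k\frac{1}{\sqrt{1+\theta_k^2}}\le 1 .
\]
An equivalent route avoids the normality argument. We have $(I-A)^T(I-A)=I-A^T-A+A^TA=I+A^TA\succeq I$, so the smallest singular value of $I-A$ is at least $1$. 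Either way $2\|(I-A)^{-1}\|_2^2\le 2$.

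There is no real obstacle here. The only point needing care is the justification of \eqref{eq:derivative_Cayley}, namely the commutation of $A$ with $(I\pm A)^{\pm1}$. This also explains why the factor comes out as $2(I-A)^{-1}H(I-A)^{-1}$ rather than an expression involving $(I+A)$. The bound also shows that equality $\|d(\Cay)_A\|_F=2$ is attained at $A=0$, the point of worst conditioning.
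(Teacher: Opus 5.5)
Your proof is correct and matches the paper's: both use the same submultiplicativity bound $\|(I-A)^{-1}H(I-A)^{-1}\|_F\le\|(I-A)^{-1}\|_2^2\|H\|_F$ and then show $\|(I-A)^{-1}\|_2\le 1$, which the paper does via the real Schur form with explicit $2\times 2$ blocks and which your normality argument, or the identity $(I-A)^T(I-A)=I+A^TA$, gives more compactly. One small correction: your verification of \eqref{eq:derivative_Cayley} uses no commutation at all, because $(I-A)^{-1}H=(I-A)^{-1}H(I-A)^{-1}(I-A)$ holds for every invertible $I-A$, so the ``point needing care'' you flag is not actually needed.
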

\begin{proof}
    Take $H\in\Skew(p)$ with $\|H\|_F=1$. From the derivative of $\Cay(A)$ in  \eqref{eq:derivative_Cayley}, 
    \begin{equation*}
        \|d(\Cay)_A[H]\|_F=2\|(I-A)^{-1}H(I-A)^{-1}\|_F\leq 2\|(I-A)^{-1}\|^2_2\|H\|_F,
    \end{equation*}
    and so $\|d(\Cay)_A\|_F\leq 2\|(I-A)^{-1}\|^2_2$. 

    Using the real Schur form $A=PTP^T$, where $T$ is block-diagonal, it follows that 
    \begin{equation*}
        \|(I-A)^{-1}\|_2=\|(I-T)^{-1}\|_2.
    \end{equation*}
    Each ($2\times 2$)-block $[(I-T)^{-1}]_{ii}$ of $(I-T)^{-1}$ is given by
    \begin{equation*}
        [(I-T)^{-1}]_{ii}=\left(\begin{bmatrix}
            1&\theta_i\\
            -\theta_i&1
        \end{bmatrix}\right)^{-1}=\frac{1}{1+\theta_i^2}\begin{bmatrix}
            1&-\theta_i\\
            \theta_i&1
        \end{bmatrix}
    \end{equation*}
    and so $\|[(I-T)^{-1}]_{ii}\|_2=\frac{\sqrt{1+\theta_i^2}}{1+\theta_i^2}\leq 1$. The ($1\times 1$) blocks of $(1-T)^{-1}$ are all equal to $1$, and so $\|(I-A)^{-1}\|_2\leq 1$, completing the proof. 
\end{proof}
We now state the counterpart to \Cref{lem:cond_phi} with $\varphi_E$ replaced by $\varphi_E^{\textsf{Cay}}$, 
\begin{lemma}\label{lem:cond_phi_Cay}
    The absolute condition of $\varphi_E^{\textsf{Cay}}$ at  $X\in \Skew(p)\times \R^{(n-p)\times p}$ in \eqref{eq:param_cay} is bounded
    \begin{equation*}
        \|d (\varphi^{\textsf{Cay}}_E)_X\|_F\leq 3. 
    \end{equation*}
\end{lemma}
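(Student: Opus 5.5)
We follow the proof of \Cref{lem:cond_phi} verbatim, with $\exp_m$ replaced by $A\mapsto \Cay(\tfrac12 A)$. Fix $X=\begin{bmatrix}A\\B\end{bmatrix}$ and a direction $\Delta=\begin{bmatrix}\Delta A\\ \Delta B\end{bmatrix}\in\Skew(p)\times\R^{(n-p)\times p}$ with $\|\Delta\|_F=1$. Write $S(B)=(I+B^TB)^{-\frac12}$ and $G(B)=(I+B^TB)^{\frac12}$ as before. The product rule and the chain rule give
\begin{equation*}
d(\varphi^{\textsf{Cay}}_E)_X[\Delta]=\begin{bmatrix} \tfrac12 d(\Cay)_{\frac12 A}[\Delta A]\\ \Delta B\end{bmatrix}S(B)-\begin{bmatrix}\Cay(\tfrac12 A)\\ B\end{bmatrix}S(B)\,dG_B[\Delta B]\,S(B).
\end{equation*}
This has exactly the structure of \eqref{eq:dvarphi}.

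The only new ingredient is a bound on the first block. Since $\tfrac12 A\in\Skew(p)$, \Cref{lem:cond_cay_trans} gives $\|d(\Cay)_{\frac12 A}\|_F\le 2$. Hence
\begin{equation*}
\|\tfrac12 d(\Cay)_{\frac12 A}[\Delta A]\|_F\le \|\Delta A\|_F.
\end{equation*}
This replaces item 3 of \Cref{lem:prelim_res}. The factor $\tfrac12$ coming from the inner argument exactly cancels the constant $2$ in \Cref{lem:cond_cay_trans}, and this cancellation is the one point where the argument could have failed; here it works.

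For the second term, $\Cay(\tfrac12 A)\in SO(p)$, so $\|\Cay(\tfrac12 A)S(B)\|_2\le\|S(B)\|_2\le 1$. We also have $\|BS(B)\|_2=\sigma_1/\sqrt{1+\sigma_1^2}\le1$. Moreover, $\begin{bmatrix}\Cay(\frac12 A)\\ B\end{bmatrix}S(B)$ has orthonormal columns, since $(I+B^TB)^{-\frac12}(I+B^TB)(I+B^TB)^{-\frac12}=I$, so its 2-norm is actually $1$. Either way, the estimate $\sqrt{1+1}$ used in \Cref{lem:cond_phi} still holds. Items 1 and 2 of \Cref{lem:prelim_res} give $\|dG_B[\Delta B]\|_F\le\sqrt2$ and $\|S(B)\|_2\le 1$.

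Combining these with the triangle inequality and $\|ABC\|_F\le\|A\|_2\|B\|_F\|C\|_2$ yields
\begin{equation*}
\|d(\varphi^{\textsf{Cay}}_E)_X[\Delta]\|_F\le \sqrt{\|\Delta A\|_F^2+\|\Delta B\|_F^2}+\sqrt2\cdot\sqrt2 = 1+2=3.
\end{equation*}
This is the claimed bound $\|d(\varphi^{\textsf{Cay}}_E)_X\|_F\le 3$.
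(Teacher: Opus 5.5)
Your proposal is correct and is the same argument as the paper's, which simply reruns the proof of \Cref{lem:cond_phi} with \Cref{lem:cond_cay_trans} in place of item 3 of \Cref{lem:prelim_res}. You are more careful than the paper's one-line proof: you make explicit the chain-rule factor $\tfrac12$ that cancels the constant $2$ from \Cref{lem:cond_cay_trans} (without it, the inequality $\|d(\Cay)_{A/2}[\Delta A]\|_F\le\|\Delta A\|_F$ fails at $A=0$), and your orthonormal-columns observation would even sharpen the constant from $3$ to $1+\sqrt2$.
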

\begin{proof}
    The proof is essentially the same as that of \Cref{lem:cond_phi}, where \Cref{lem:cond_cay_trans} implies the inequality $\|d(\Cay)_{(\tfrac{A}{2})}[\Delta A]\|_F^2\leq \|\Delta A\|_F^2$.
\end{proof}
We can now formulate the Cayley version of \Cref{thm:main_theorem} in the setting of the coordinate chart and parameterization \eqref{eq:coordinate_cay_chart} and \eqref{eq:param_cay}.
\begin{theorem}\label{thm:main_theorem_Cayley}
    Let $X, Y\in\Skew(p)\times \R^{(n-p)\times p}$. If $\|X-Y\|_F<\tfrac{2}{3}$ it holds that 
    \begin{equation*}
        \dist_E(\varphi^{\textsf{Cay}}_E(X),\varphi^{\textsf{Cay}}_E(Y))\leq  2\arcsin(\tfrac{3}{2}\|X-Y\|_F).
    \end{equation*}
\end{theorem}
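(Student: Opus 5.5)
The proof will mirror that of \Cref{thm:main_theorem}: first obtain a global Lipschitz bound for $\varphi_E^{\textsf{Cay}}$ in the Frobenius norm, then convert the chordal distance into a Riemannian distance under the Euclidean metric.

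\textbf{Lipschitz step.} The domain $\Skew(p)\times\R^{(n-p)\times p}$ is a vector space, hence convex. By \Cref{lem:cond_phi_Cay} the differential $d(\varphi_E^{\textsf{Cay}})_Z$ has induced Frobenius norm at most $3$ at every point $Z$. This holds globally, because \Cref{lem:cond_cay_trans} bounds $\|d(\Cay)_{A/2}\|_F$ uniformly over all skew-symmetric $A$. Applying the mean-value inequality along the segment $Z(s)=Y+s(X-Y)$, $s\in[0,1]$, gives
\[
\|\varphi_E^{\textsf{Cay}}(X)-\varphi_E^{\textsf{Cay}}(Y)\|_F\leq \int_0^1\|d(\varphi_E^{\textsf{Cay}})_{Z(s)}[X-Y]\|_F\,ds\leq 3\|X-Y\|_F .
\]
This is the Cayley analogue of \eqref{eq:varphi_local_Lipschitz}.

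\textbf{Chord-to-distance step.} Both images $\varphi_E^{\textsf{Cay}}(X)$ and $\varphi_E^{\textsf{Cay}}(Y)$ lie on $\St(n,p)$. Inequality \eqref{eq:dist_and_F_norm_comparison} relies only on the maximal Fr\'enet curvature $\hat\kappa=1$ of $\St(n,p)$ under the Euclidean metric, and not on which parameterization produced the points. Hence it applies verbatim:
\[
\dist_E\bigl(\varphi_E^{\textsf{Cay}}(X),\varphi_E^{\textsf{Cay}}(Y)\bigr)\leq 2\arcsin\Bigl(\tfrac12\|\varphi_E^{\textsf{Cay}}(X)-\varphi_E^{\textsf{Cay}}(Y)\|_F\Bigr).
\]

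\textbf{Combining.} Since $\arcsin$ is increasing on $[-1,1]$, we may insert the Lipschitz bound into the argument, provided $\tfrac32\|X-Y\|_F\le 1$. The hypothesis $\|X-Y\|_F<\tfrac23$ guarantees exactly this. The result is $\dist_E\leq 2\arcsin(\tfrac32\|X-Y\|_F)$.

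\textbf{Main obstacle.} The only delicate point is verifying that \Cref{lem:cond_phi_Cay} really holds with the constant $3$ uniformly. In the analogue of \eqref{eq:dvarphi}, the first block becomes $d(\Cay)_{A/2}[\tfrac12\Delta A]$. By \Cref{lem:cond_cay_trans} its norm is at most $2\cdot\tfrac12\|\Delta A\|_F=\|\Delta A\|_F$, which is the bound needed. The remaining factors are unchanged from the proof of \Cref{lem:cond_phi}: $\|\Cay(A/2)S(B)\|_2\le1$ because $\Cay(A/2)$ is orthogonal, and $\|S(B)\|_2\le1$, $\|BS(B)\|_2\le 1$, $\|dG_B\|_F\le\sqrt2$ by \Cref{lem:prelim_res}. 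Once these are checked, the theorem follows immediately.
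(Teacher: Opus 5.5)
Your proposal is correct and follows the route the paper intends. The uniform bound of \Cref{lem:cond_phi_Cay} gives $\|\varphi_E^{\textsf{Cay}}(X)-\varphi_E^{\textsf{Cay}}(Y)\|_F\leq 3\|X-Y\|_F$ via the mean-value inequality, and this is combined with the chord-to-distance bound \eqref{eq:dist_and_F_norm_comparison} exactly as in the proof of \Cref{thm:main_theorem}. Your explicit tracking of the chain-rule factor $\tfrac12$ in $d(\Cay)_{A/2}[\tfrac12\Delta A]$ is slightly more careful than the paper's one-line proof of \Cref{lem:cond_phi_Cay}.
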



\subsection{Conditioning of the Cayley retraction}

The only second-order accurate retraction under the canonical metric with closed-form inverse known to us is the Cayley retraction from \eqref{eq:cayley_retraction}. (It is based on, but must not be confused with the Cayley transformation.) 
We restate the expressions 
\begin{align}
    \mcR_{\hat U}(\xi)&=\Cay\qty(\tfrac{1}{2}(P_{\hat U}\xi {\hat U}^T-{\hat U}\xi^TP_{\hat U})){\hat U}.
    \tag{\ref{eq:cayley_retraction}}\\
    \mcR_{\hat U}^{-1}(U)&=2{\hat U}F(U)^T+2UF(U)-2{\hat U}, \ F(U)=(I+{\hat U}^TU)^{-1}. 
    \tag{\ref{eq:inv_Cay_ret}}
\end{align}
When using \eqref{eq:cayley_retraction}, the conditioning of mapping data from $\St(n,p)$ to a local coordinate neighborhood in $T_U\St(n,p)$ is essentially governed by the condition number of $F(V)=(I_p+\hat U^TV)^{-1}$. Differentiating $\mcR_{\hat U}^{-1}(V)$ yields, with $\Delta\in T_V\St(n,p)$,
\begin{equation}\label{eq:dinv_Cay_ret}
    d(\mcR_{\hat U}^{-1})_V[\Delta]=-2\hat UF(V)^T (\Delta^T\hat U) F(V)^T-2VF(V) (\hat U^T\Delta) F(V)+2\Delta F(V).
\end{equation}
Taking norms we obtain for $\|\Delta\|_2=1$
\begin{equation}
\label{eq:cond_invcanon_cayley}
    \|d(\mcR_{\hat U}^{-1})_V[\Delta]\|_2\leq 2(2\|F(V)\|^2_2+\|F(V)\|_2).
\end{equation}
The bound is large whenever $I+\hat U^TV$ is close to being singular, or equivalently whenever the matrix $\hat U^TV$ has an eigenvalue close to $-1$. See also the upcoming \Cref{sec:well_defined_inverse}.

\begin{figure}[!t]
    \centering
    \includegraphics[width=0.6\linewidth]{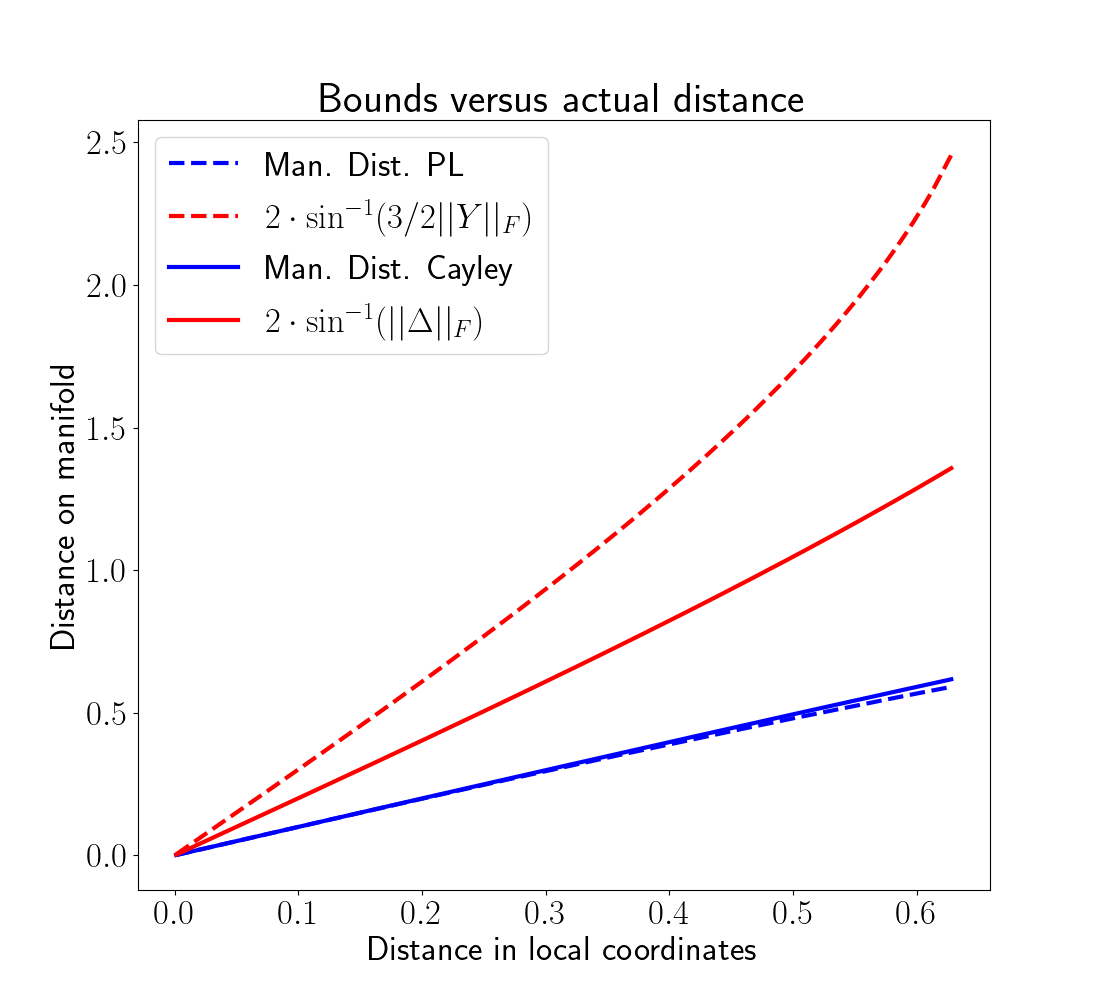}
    \caption{Illustration of the error bounds of \Cref{thm:main_theorem} and \Cref{thm:main_theorem_Cay_ret} on $\St(500,120)$. We generated 200 random points in a common neighborhood of $\psi_E(E)=0$, $\qty{Y^{(i)}}_{i=1}^{200}\subset \Skew(p)\times \R^{(n-p)\times p}, Y^{(i)}=(
        A^{(i)}, B^{(i)})$, and map them to the manifold using $\varphi_E$ in \eqref{eq:varphi_Q} or transform each $Y^{(i)}$ into a tangent vector by the identification $Y^{(i)}\simeq \begin{bmatrix}
            A^{(i)} \\ B^{(i)}
        \end{bmatrix}=\Delta^{(i)}\in T_E\St(n,p)$ and map to the manifold using $\mcR_E$ in \eqref{eq:cayley_retraction}. The manifold distance is computed under the Euclidean metric as $\dist(E,\varphi_E(Y^{(i)}))=\|\Log_{E}(\varphi_E(Y^{(i)}))\|_F$ (blue, dashed) and $\dist_E(E,\mcR_E(\Delta^{(i)}))=\|\Log_{E}(\mcR_E(\Delta^{(i)}))\|_F$ (blue, solid), and the associated bounds of \Cref{thm:main_theorem} (red, dashed) and \Cref{thm:main_theorem_Cay_ret} are shown for comparison.
    }
    \label{fig:illustrate_err_bounds}
\end{figure}
For the direction `tangent space to manifold' we consider the conditioning of the retraction. 
Let $\Delta\in T_{R_{\hat U}(\xi)}\St(n,p), \|\Delta\|_F=1$.
With inner function $h(\xi)= \tfrac{1}{2}(P_{\hat U}\xi {\hat U}^T-{\hat U}\xi^TP_{\hat U})$ we obtain by the chain rule and submultiplicativity
\begin{equation*}
    \mathcal{R}_U(\xi) = \Cay(h(\xi))U \Rightarrow \|d(\Cay\circ h)_{\xi}[\Delta]\|_F \leq  \|d\Cay_{h(\xi)}\|_F\|dh_{\xi}[\Delta]\|_F.
\end{equation*}
As $\|dh_{\xi}[\Delta]\|_F= \frac12 \|P_{\hat U}\Delta \hat U ^T - \hat U\Delta^TP_{\hat U}\|_F\leq\|\hat U\Delta^TP\|_F=\|\Delta^TP_{\hat U}\|_F\leq \|\Delta\|_F\|P_{\hat U}\|_2$, it follows by $\|P_{\hat U}\|_2=1$  that $\|dh_{\xi}[\Delta]\|_F\leq 1$. Using \Cref{lem:cond_cay_trans} we obtain 
\begin{equation}
    \|d(\Cay\circ h)_{\xi}\|_F \leq 2.
\end{equation}
Hence, we have
\begin{lemma}
    \label{lem:cond_canon_cayley}
    The absolute condition of $\mathcal{R}_{\hat U}$ at  $\xi\in T_{\hat U}St(n,p)$ in \eqref{eq:cayley_retraction} is bounded
    \begin{equation*}
        \|d (\mathcal{R}_{\hat U})_{\xi}\|_F \leq 2. 
    \end{equation*}
\end{lemma}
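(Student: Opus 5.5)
The plan is to write $\mcR_{\hat U}(\xi)=\Cay(h(\xi))\hat U$ with the linear inner map $h(\xi)=\tfrac12(P_{\hat U}\xi\hat U^T-\hat U\xi^TP_{\hat U})\in\Skew(n)$, and to bound the differential through the chain rule. Since $h$ is linear, $dh_\xi=h$, and right multiplication by the fixed matrix $\hat U$ is linear as well. This gives
\begin{equation*}
d(\mcR_{\hat U})_\xi[\Delta]=d(\Cay)_{h(\xi)}\big[h(\Delta)\big]\,\hat U .
\end{equation*}
Here $\Delta\in T_{\hat U}\St(n,p)$ with $\|\Delta\|_F=1$. Taking Frobenius norms and using $\|M\hat U\|_F\le\|M\|_F\|\hat U\|_2=\|M\|_F$ (because $\hat U$ has orthonormal columns), it suffices to bound $\|d(\Cay)_{h(\xi)}[h(\Delta)]\|_F$.

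The first step is to invoke \Cref{lem:cond_cay_trans} with $p$ replaced by $n$. Its proof uses only skew-symmetry, so $\|d(\Cay)_{h(\xi)}\|_F\le 2\|(I-h(\xi))^{-1}\|_2^2\le 2$ holds for every $\xi$. It then remains to show $\|h(\Delta)\|_F\le\|\Delta\|_F$. The paper's preceding estimate does this through the triangle inequality: each of the two terms $P_{\hat U}\Delta\hat U^T$ and $\hat U\Delta^TP_{\hat U}$ has Frobenius norm at most $\|\Delta\|_F\|P_{\hat U}\|_2\|\hat U\|_2$. Since $P_{\hat U}=I-\tfrac12\hat U\hat U^T$ has eigenvalues $1$ and $\tfrac12$, we get $\|P_{\hat U}\|_2=1$. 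Halving the sum of the two terms then gives $\|h(\Delta)\|_F\le 1$.

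Combining the two steps yields $\|d(\mcR_{\hat U})_\xi[\Delta]\|_F\le 2\cdot 1\cdot 1=2$, and taking the supremum over unit $\Delta$ gives the claimed bound in the induced Frobenius norm \eqref{eq:ind_F_operator_norm}.

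The main point to check is that \Cref{lem:cond_cay_trans} transfers to $n\times n$ skew matrices. This is immediate, because the real Schur-form argument is dimension-independent and gives $\|(I-A)^{-1}\|_2\le1$ for all $A\in\Skew(n)$. A second check is that the derivative formula \eqref{eq:derivative_Cayley} holds for arbitrary skew directions. Beyond these, no obstacle is expected. The bound could likely be sharpened, e.g.\ via an exact orthogonal splitting of $h(\Delta)$ into its $\hat U$ and $\hat U_\perp$ blocks, but that is not needed here.
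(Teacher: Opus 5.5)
Your proposal is correct and follows the paper's own argument essentially step for step. It uses the chain rule through the linear map $h$, the bound $\|d(\Cay)_{h(\xi)}\|_F\le 2$ from \Cref{lem:cond_cay_trans} (which indeed holds for skew matrices of any size), and $\|h(\Delta)\|_F\le\|\Delta\|_F$ via the triangle inequality together with $\|P_{\hat U}\|_2\le 1$. Your explicit handling of the trailing factor $\hat U$ through $\|M\hat U\|_F\le\|M\|_F$, and your choice $\Delta\in T_{\hat U}\St(n,p)$ rather than $T_{\mcR_{\hat U}(\xi)}\St(n,p)$, are minor clarifications that the paper leaves implicit.
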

By the mean-value inequality 
\begin{equation*}
    \|\mcR_U(\xi)-\mcR_U(\eta)\|_F\leq 2\|\xi-\eta\|_F,
\end{equation*}
and so we have the following result.
\begin{theorem}\label{thm:main_theorem_Cay_ret}
    Let $\xi,\eta \in T_U\St(n,p)$. If $\|\xi-\eta\|_F<1$ it holds that 
    \begin{equation*}
        \dist_E(\mcR_U(\xi),\mcR_U(\eta))\leq 2\arcsin(\|\xi-\eta\|_F),
    \end{equation*}
    where $\dist_E$ is the Riemannian distance under the Euclidean metric. 
\end{theorem}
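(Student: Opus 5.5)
The proof will follow the pattern of \Cref{thm:main_theorem}: combine a Lipschitz bound in the Frobenius norm with the chordal-to-geodesic comparison \eqref{eq:dist_and_F_norm_comparison}. The first ingredient is \Cref{lem:cond_canon_cayley}, which gives $\|d(\mcR_{U})_\zeta\|_F\leq 2$ uniformly for all $\zeta\in T_U\St(n,p)$. Since $T_U\St(n,p)$ is a vector space, the segment $\zeta(s)=\eta+s(\xi-\eta)$, $s\in[0,1]$, stays in the domain. Because $\Cay$ is defined on all of $\Skew(n)$, $\mcR_U$ is smooth on the whole tangent space. The mean-value inequality, in its integral form $\mcR_U(\xi)-\mcR_U(\eta)=\int_0^1 d(\mcR_U)_{\zeta(s)}[\xi-\eta]\,ds$, then yields
\[
\|\mcR_U(\xi)-\mcR_U(\eta)\|_F\leq 2\|\xi-\eta\|_F.
\]

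Second, I will invoke \eqref{eq:dist_and_F_norm_comparison}. That bound was stated for images of $\varphi_E$, but it only uses that the two points lie on $\St(n,p)\subset\R^{n\times p}$, together with the Fr\'enet curvature bound $\hat\kappa=1$ and \cite[Property I]{Attali2007}. Hence for any $U_a,U_b\in\St(n,p)$ with $\|U_a-U_b\|_F<2$,
\[
\dist_E(U_a,U_b)\leq 2\arcsin\big(\tfrac12\|U_a-U_b\|_F\big).
\]
Setting $U_a=\mcR_U(\xi)$ and $U_b=\mcR_U(\eta)$, the hypothesis $\|\xi-\eta\|_F<1$ gives $\tfrac12\|U_a-U_b\|_F\leq\|\xi-\eta\|_F<1$, so the argument of $\arcsin$ lies in its domain. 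Since $\arcsin$ is monotone increasing on $[0,1]$, we obtain $\dist_E(\mcR_U(\xi),\mcR_U(\eta))\leq 2\arcsin(\|\xi-\eta\|_F)$.

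The only delicate point is justifying that \eqref{eq:dist_and_F_norm_comparison} holds for arbitrary pairs of Stiefel points rather than just $\varphi_E$-images, and in the required range of chordal distances. I would note explicitly that the comparison is a statement about the embedded submanifold alone. If a base point other than $E$ is a concern, the bound also follows from the alternative cited result \cite[Theorem 7.1]{mataigne2026bounds} for $n>2p$. The Lipschitz step itself is routine given \Cref{lem:cond_canon_cayley}.
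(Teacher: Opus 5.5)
Your proposal is correct and matches the paper's argument: the uniform bound $\|d(\mcR_U)_\zeta\|_F\le 2$ from \Cref{lem:cond_canon_cayley} plus the mean-value inequality gives $\|\mcR_U(\xi)-\mcR_U(\eta)\|_F\le 2\|\xi-\eta\|_F$, which is then combined with the chordal-to-geodesic comparison \eqref{eq:dist_and_F_norm_comparison}. Your observation that this comparison depends only on $\St(n,p)\subset\R^{n\times p}$, not on the points being $\varphi_E$-images, makes explicit a step the paper leaves implicit.
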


The error bounds of \Cref{thm:main_theorem} and \Cref{thm:main_theorem_Cay_ret} are presented in \Cref{fig:illustrate_err_bounds}. 
We observe that the true manifold distances and the error bounds differ as the distance in local coordinates increases.

\subsection{Guaranteeing well-defined inverse retractions}\label{sec:well_defined_inverse}
Restricted to a neighborhood of $E$, when computing the inverse canonical Cayley retraction \eqref{eq:inv_Cay_ret}, the matrix $(I_p+U^TE)=I_p+U_1^T$ has to be inverted, and so $U_1$ cannot have the eigenvalue $-1$. 
For the polar-light retraction in both the standard form \eqref{eq:pl_retraction} and the Cayley-variant \eqref{eq:coordinate_cay_chart}, it is the polar factor of $U_1$ which cannot have the eigenvalue $-1$. 

To gain geometric insight, consider as an example $E\in \St(4,2)$, and let $U$ be connected to $E$ by a curve $y(t)$, where 
\begin{equation*}
    U=\begin{bmatrix}
    -1&0\\
    0&1\\
    0&0\\
    0&0
\end{bmatrix},  \ \ y(t)=\begin{bmatrix}
        \cos(t)&0\\
        0&1\\
        \sin(t)&0\\
        0&0
    \end{bmatrix}.
\end{equation*}
The curve $t\mapsto y(t)$ comes from computing the Riemannian exponential map under the Euclidean metric in the direction $t\xi=tE_\perp \begin{bmatrix}
    1&0\\
    0&0
\end{bmatrix}$ \cite[pp. 8]{zimmermann2025injectivity}. At $t=\pi$ the curve reaches $y(\pi)=U$. Alternatively, we could have taken $-t\xi$ in the Riemannian exponential and have obtained a similar curve, reflecting that $U$ is not in an open geodesic ball of $E$.  The occurrence of the eigenvalue $-1$ of $U_1$ indicates that $U$ is too far away from the reference point $E$. The first column of $U$ and of $E$ span the same subspace, but are of opposite sign.
This information can also be captured by the subspace angle. The following result shows how to construct an explicit normal neighborhood, in which the canonical Cayley retraction is guaranteed to stay invertible. 
\begin{theorem}\label{thm:no_minus_one_eigs}
    Let $B_{\frac{\pi}4,2}(0)=\qty{\Delta\in T_E\St(n,p):\|\Delta\|_2<\tfrac{\pi}{4}}$. For any $t\in [0,1]$ and $\xi\in B_{\frac{\pi}4,2}(0)$ it holds that $U=\Exp_E^{\textnormal{Canon.}}(t\xi)$ has a nonsingular upper ($p\times p$) block with no eigenvalue equal to $-1$, where $\Exp_E^{\textnormal{Canon.}}$ is the Riemannian exponential map under the canonical metric. 
\end{theorem}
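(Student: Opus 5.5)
The plan is to make the upper block $U_1$ explicit via \eqref{eq:Riemann_exp} with $\beta=\tfrac12$, and then show that its symmetric part is positive definite. Both claims of the theorem follow from that.

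\textbf{Explicit form of $U_1$.} Write $\xi=EA+E_\perp B$ with $A\in\Skew(p)$ and $B\in\R^{(n-p)\times p}$, and take $\hat U=E$, $\hat Q=I_n$. For the canonical metric, $\beta=\tfrac12$, so the trailing factor $\exp_m((1-2\beta)A)$ in \eqref{eq:Riemann_exp} is $I_p$. Hence
\[
U=\Exp_E^{\textnormal{Canon.}}(t\xi)=\exp_m(M)E,\qquad M=t\begin{bmatrix} A&-B^T\\ B&0\end{bmatrix}\in\Skew(n),
\]
and the upper block is $U_1=E^T\exp_m(M)E$.

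\textbf{Bounding $\|M\|_2$.} This is the step I expect to be the only real obstacle, since the hypothesis controls $\|\xi\|_2$ rather than $\|M\|_2$. Note that $\begin{bmatrix}A\\ B\end{bmatrix}=\xi$ is the first block column of $M/t$. The second block column is $\begin{bmatrix}-B^T\\0\end{bmatrix}$, whose $2$-norm is $\|B\|_2\le\|\xi\|_2$. Splitting $M/t$ into these two block columns and using the triangle inequality gives
\[
\|M\|_2\le t\bigl(\|\xi\|_2+\|B\|_2\bigr)\le 2\|\xi\|_2<\tfrac{\pi}{2}.
\]
The constant $\pi/4$ in the statement is exactly what this crude bound needs. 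If it turned out too lossy, I would refine it using the block structure of $MM^T$, but it should suffice.

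\textbf{Positive definite symmetric part.} Since $M$ is skew-symmetric, it is normal with eigenvalues $\pm i\theta_j$, where $|\theta_j|\le\|M\|_2<\pi/2$. Therefore $\exp_m(M)$ is orthogonally diagonalizable (over $\C$) with eigenvalues $e^{\pm i\theta_j}$. Its symmetric part $\tfrac12(\exp_m(M)+\exp_m(M)^T)$ has eigenvalues $\cos\theta_j\ge\cos(\|M\|_2)>0$, so it is SPD. Compressing by $E$ gives
\[
\tfrac12(U_1+U_1^T)=E^T\,\tfrac12\bigl(\exp_m(M)+\exp_m(M)^T\bigr)E,
\]
which is SPD with smallest eigenvalue at least $\cos(2\|\xi\|_2)>0$.

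\textbf{Conclusion.} Let $\lambda$ be an eigenvalue of $U_1$ with unit eigenvector $x\in\C^p$. Then
\[
\operatorname{Re}\lambda=\operatorname{Re}(x^*U_1x)=x^*\tfrac12(U_1+U_1^T)x>0.
\]
So $0$ is not an eigenvalue, meaning $U_1$ is nonsingular, and $-1$ is not an eigenvalue either. Equivalently, $I_p+U_1$ has SPD symmetric part and is invertible, which is exactly what \eqref{eq:inv_Cay_ret} requires. This argument holds uniformly for all $t\in[0,1]$.
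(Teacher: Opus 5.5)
Your proof is correct and follows essentially the same route as the paper. Both arguments write $U_1=E^T\exp_m(M)E$ and use $\|M\|_2<\pi/2$ together with the unitary diagonalization of the skew-symmetric $M$ to get $\operatorname{Re}(x^*U_1x)>0$ for $x\neq 0$; the paper's $\sum_j e^{i\lambda_j}|y_j|^2$ is exactly your statement that the symmetric part of $\exp_m(M)$ is positive definite. Beyond the paper, you justify $\|M\|_2\le 2\|\xi\|_2$, which the paper only asserts, and you give the explicit lower bound $\cos(2\|\xi\|_2)$.
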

\begin{proof}
    As $\xi=\begin{bmatrix}
        A\\
        B
    \end{bmatrix}\in B_{\frac{\pi}4,2}(0)$ it holds that $M=\begin{bmatrix}
        A&-B^T\\
        B&0
    \end{bmatrix}$ has norm $\|M\|_2<\tfrac{\pi}{2}$. Applying the complex Schur decomposition yields 
    \begin{equation*}
        M=P\begin{bmatrix}
            i\lambda_1& & \\
            & \ddots &\\
            & & i\lambda_k
        \end{bmatrix}P^*,
    \end{equation*}
    where  $\lambda_{j}\in (-\frac{\pi}{2},\frac{\pi}{2})$ and $P^*P=I_n$. It follows that $\exp_m(M)=PDP^*$ with $D=\diag(e^{i\lambda_1},\dots,e^{i\lambda_k})$ and so $U=PDP^*E$. The upper $(p\times p)$ block is selected by pre-multiplying by $E^T=E^*$, so we can consider the matrix $E^*PDP^*E$, where we note that $P^*E:=V$ has full row rank since $P$ is unitary. Let $0\neq x\in \C^p$. Then 
    \begin{equation*}
        x^*V^*DVx=(Vx)^*D(Vx):=y^*Dy=\sum_{j=1}^p e^{i\lambda_j}|y_i|^2
    \end{equation*}
    Since $\lambda_j\in (-\frac{\pi}{2},\frac{\pi}{2})$ we have $\cos(\lambda_j) > 0$, and so the real part of the sum is strictly positive. This implies that any nontrivial eigenvalue of $E^*PDP^*E$ has strictly positive real part\footnote{Note that this does not imply that $V^*DV$ is positive definite.}, and it follows that $E^*PDP^*E$ is nonsingular for $\xi\in B_{\frac{\pi}4,2}(0)$. Scaling $M$ with $t\in[0,1]$ implies $t\lambda_j\in (-\frac{\pi}{2},\frac{\pi}{2})$, proves the claim. 
\end{proof}
The result generalizes to all of $\St(n,p)$. Let $\hat B_{\frac{\pi}4,2}(0)=\qty{\Delta\in T_{\hat U}\St(n,p): \|\Delta\|_2<\tfrac{\pi}{4}}$. Then for $\xi\in \hat B_{\frac{\pi}4,2}(0)$, where $\xi=\begin{bmatrix}
    \hat U&\hat U_\perp
\end{bmatrix}\begin{bmatrix}
    A\\
    B
\end{bmatrix}$,
\begin{equation}\label{eq:generalize_to_all_St}
    \begin{array}{cl}
    &V=\Exp_{\hat U}^{\textnormal{Canon}}(\xi)=\begin{bmatrix}
    \hat U& \hat U_\perp
\end{bmatrix}\exp_m\qty(\begin{bmatrix}
        A&-B^T\\
        B&0
    \end{bmatrix})E\\
    \Rightarrow & 
    \hat U^TV
    =E^T\exp_m\qty(\begin{bmatrix}
        A&-B^T\\
        B&0
    \end{bmatrix})E,
    \end{array}
\end{equation}
and the same argument as in the proof above applies. The result provides a quantitative indication of the locality of the local coordinate chart induced by the canonical Cayley retraction.

For the polar-light coordinates, it is the polar factor of $U_1$, $R=U_1(U_1^TU_1)^{-\frac{1}{2}}$ which cannot have the eigenvalue $-1$. If $R$ has the eigenvalue $-1$, then $Rx=-x$ for some $x\in \R^p$ and so $(R-I)x=-2x\Rightarrow \|R-I\|_2\geq 2$, and as $\|R-I\|_2\leq 2$ we have $\|R-I\|_2=2$. The task is therefore to construct a normal neighborhood $\mathcal{B}_E$ of $E$ for which the polar factors of the $U_1$-block in $U\in \mathcal{B}_E$ satisfy $\|R-I\|_2<2$.   We state the following result which is a specialized corollary of \cite[Theorem 2.3]{Roy:1993} 
\begin{lemma}\label{lem:special_corollary_of_23}
    Let $U_1=\tilde R\tilde H$ be a real perturbation of $I_p$ with offset $\Delta U_1 = U_1-I_p$ and polar factorization $\tilde R\tilde H$, and assume $\|\Delta U_1\|_2=\|U_1-I_p\|_2<1$ . Then,
    \begin{equation*}
        \|\tilde R-I_p\|_2\leq 2\|\Delta U_1\|_2.
    \end{equation*}
    As a consequence, if $\|\Delta U_1\|_2 <1$, then the polar factor $\tilde R$ associated with $U_1$ cannot have an eigenvalue of $-1$.
\end{lemma}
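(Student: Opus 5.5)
The plan is to specialize the perturbation bound \eqref{eq:UUtilde_bound} for the orthogonal polar factor to the unperturbed matrix $A=I_p$. Its polar decomposition is $I_p = I_p\cdot I_p$, so $U=I_p$ and $\sigma_p=1$. The perturbed matrix is $\tilde A = U_1 = I_p+\Delta U_1$, with polar factor $\tilde R$ and smallest singular value $\tilde\sigma_p$. Applying \eqref{eq:UUtilde_bound} with the spectral norm, which is unitarily invariant, gives
\begin{equation*}
  \|\tilde R - I_p\|_2 \leq \frac{2}{1+\tilde\sigma_p}\,\|\Delta U_1\|_2 .
\end{equation*}
Since $\tilde\sigma_p\geq 0$, the prefactor is at most $2$, and the claimed inequality $\|\tilde R-I_p\|_2\leq 2\|\Delta U_1\|_2$ follows. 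This is presumably how the result arises as a corollary of \cite[Theorem 2.3]{Roy:1993}.

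Before doing this, I must check that the hypothesis $\|\Delta U_1\|_2<1$ makes the bound applicable, i.e., that $U_1$ is nonsingular so that $\tilde R$ is unique and the bound from \cite[Theorem 8.10]{Higham:2008:FM} holds. By Weyl's inequality for singular values, $\tilde\sigma_p \geq 1-\|\Delta U_1\|_2>0$. A Neumann series argument for $I_p+\Delta U_1$ gives the same conclusion. Hence $U_1$ is invertible, $\tilde R = U_1(U_1^TU_1)^{-\frac12}\in O(p)$ is well defined, and the estimate applies. We also get the sharper bound $\frac{2}{2-\|\Delta U_1\|_2}\|\Delta U_1\|_2$, though it is not needed.

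For the consequence, I want to show that $-1$ is not an eigenvalue of $\tilde R$. Using the observation made just before the lemma, if $\tilde R x=-x$ for some $x\neq 0$, then $\|\tilde R-I_p\|_2\geq 2$. However, $\|\tilde R-I_p\|_2\leq 2\|\Delta U_1\|_2$, which is below $2$ only when $\|\Delta U_1\|_2<1$. That is exactly the hypothesis, so the two inequalities contradict each other. If $\tilde R$ has a complex eigenvalue $-1$, the eigenvector may be complex. In that case $\tilde R$ is a normal real matrix, so $\|\tilde R-I_p\|_2$ equals the maximum of $|\mu-1|$ over eigenvalues $\mu$, and the same contradiction results.

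The main obstacle is making sure the constant in \eqref{eq:UUtilde_bound} is valid for the spectral norm and for real matrices near $I_p$. The key point is that the $\sigma_p$ of the unperturbed matrix contributes $1$, which is what yields the factor $2$ rather than something worse. If \eqref{eq:UUtilde_bound} is in doubt, I would fall back to a direct argument. Write $\tilde R = U_1\tilde H^{-1}$ and use
\begin{equation*}
  \tilde R - I_p = (U_1-\tilde H)\tilde H^{-1}.
\end{equation*}
Then bound $\|U_1-\tilde H\|_2$ by $2\|\Delta U_1\|_2$ and $\|\tilde H^{-1}\|_2\leq (1-\|\Delta U_1\|_2)^{-1}$. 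This route is cruder, and it would only give the eigenvalue conclusion on a smaller ball.
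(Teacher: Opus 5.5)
Your proof is correct, but it takes a different route from the paper's. The paper does not use \eqref{eq:UUtilde_bound}. It invokes \cite[Theorem 2.3]{Roy:1993}, whose hypothesis $\|\Delta U_1\|_2<\sigma_p(I_p)=1$ is exactly the assumption of the lemma, so nonsingularity of $U_1$ comes for free. That theorem gives the logarithmic estimate $\|\tilde R-I_p\|_2\le -2\log(1-\tfrac{t}{2})$ with $t=\|\Delta U_1\|_2$. The paper then weakens it via $\log x\ge \frac{x-1}{x}$ to $\frac{t}{1-t/2}\le 2t$.

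You instead specialize the Li-type bound \eqref{eq:UUtilde_bound}, already quoted in the background section, to $A=I_p$, and you supply nonsingularity of $U_1$ separately via Weyl's inequality. Your sharper intermediate bound $\frac{2t}{2-t}$ coincides exactly with the paper's intermediate quantity $\frac{t}{1-t/2}$, so your route loses nothing for the stated constant $2$.

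The trade-off is this:
\begin{itemize}
\item Your route is more self-contained. It needs no extra citation, and the crude estimate $\tilde\sigma_p\ge 0$ already suffices.
\item The paper's route gives the tighter logarithmic bound as a by-product, at the cost of an external theorem and an elementary inequality.
\end{itemize}

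The eigenvalue consequence is argued the same way in both. Two parts of your proposal are unnecessary. Your detour for complex eigenvectors can go: $-1$ is a real eigenvalue of the real matrix $\tilde R$, so $\tilde R+I_p$ has a real null vector. Your fallback argument is also not needed. Finally, your guess that the paper obtains the lemma from \cite[Theorem 2.3]{Roy:1993} by specializing the Li-type bound is not how it proceeds; the cited theorem is the logarithmic estimate above.
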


\begin{proof}
    Note that the polar factor of $I_p$ is $I_p$ itself and that $\|\Delta U_1\|_2 = \sigma_1(\Delta U_1) < 1 = \sigma_p(I_p)$. Hence, the prerequisites of \cite[Theorem 2.3]{Roy:1993} are fulfilled. Applying the theorem for the 2-norm in the situation at hand gives
    \[
    \|\tilde R - I_p\|_2\leq
     -2\log\left(1-
    \frac{\|\Delta U_1\|_2}{2}\right) \leq 2\|\Delta U_1\|_2.
    \]
    Here, we used that $\log(x)\geq \frac{x-1}{x}, x>0$, which for $x=1-\frac{t}{2}$ yields $2\log(1-\frac{t}{2})\geq 2\frac{-\frac{t}{2}}{1-\frac{t}{2}}\Leftrightarrow -2\log(1-\frac{t}{2})\leq \frac{t}{1-\frac{t}{2}}\leq 2t$ whenever $0\leq t\leq 1$.
\end{proof}
Let $U$ be a perturbation of $E$, written in normal coordinates as $U=\Exp_E^{\textnormal{Canon}}(\xi)$. Then 
\begin{equation*}
    U=E+(U-E):=E+\Delta U, \ \ \Delta U=\begin{bmatrix}
        \Delta U_1\\
        \Delta U_2
    \end{bmatrix}.
\end{equation*}
From the considerations above \Cref{lem:special_corollary_of_23}, for the polar factorization $U_1=\tilde R\tilde H$, we can ensure that $\|\tilde R-I_p\|<2$ by ensuring $\|\Delta U_1\|_2=\|E^T\Exp_E(\xi)-I_p\|_2<1$. The task is to determine $B_{\varepsilon,2}(0)$ such that  for $\xi\in B_{\varepsilon,2}(0)\subseteq T_E\St(n,p)$ the above inequality is valid.
For any $\xi=\begin{bmatrix}
    A\\
    B
\end{bmatrix}\in T_E\St(n,p)$, let $M=\begin{bmatrix}
        A&-B^T\\
        B&0
    \end{bmatrix}\in\Skew(n)$.
\begin{align*}
    \norm{E^T\Exp_E^{\textnormal{Canon}}(\xi)-I_p}_2&=\norm{E^T\exp_m\qty(M)E-E^TE}_2=\norm{E^T\qty(\sum_{j=1}^\infty \frac{1}{j!}M^j )E}_2\\
    &\leq \norm{\exp_m\qty(M)-I_n}_2
    \leq \norm{P\exp_m(D)P^*-I_n}_2\\ 
    &=\max_j|e^{i\lambda_j}-1| =\max_j\qty|2\sin\qty(\frac{\lambda_j}{2})|,
\end{align*}
where $M=P\diag(i\lambda_1,\dots,i\lambda_k)P^*$ is the Schur form., Hence, if $\lambda_j<\frac{\pi}{3}$ we have $\norm{E^T\Exp_E(\xi)-I_p}_2<1$. 
The norm bound $\norm{M}_2<\frac{\pi}{3}$ is guaranteed to hold
if $\|\xi\|_2<\frac{\pi}{6}$. We have the following result. 
\begin{theorem}\label{thm:no_minus_one_eigs_2}
    Let $B_{\frac{\pi}6,2}(0)=\qty{\Delta\in T_E\St(n,p):\|\Delta\|_2<\frac{\pi}{6}}$
    be the ball of spectral radius $\frac{\pi}{6}$. For any $t\in [0,1]$ and any $\xi\in B_{\frac{\pi}6,2}(0)$, it holds that $U=\Exp_E^{\textnormal{Canon}}(t\xi)$ has a nonsingular upper ($p\times p$) block whose polar factor $\tilde R\in O(p)$ in the polar decomposition $U=\tilde R\tilde H$ does not feature the eigenvalue $-1$.
\end{theorem}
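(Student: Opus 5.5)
The plan is to assemble the chain of estimates set out just before the statement. First I reduce the claim to the single quantity $\|\Delta U_1\|_2=\|E^T\Exp_E^{\textnormal{Canon}}(t\xi)-I_p\|_2$. Once this is shown to be $<1$, \Cref{lem:special_corollary_of_23} finishes the argument. It gives $\|\tilde R-I_p\|_2\le 2\|\Delta U_1\|_2<2$, and by the observation preceding that lemma an eigenvalue $-1$ of $\tilde R$ would force $\|\tilde R-I_p\|_2=2$, which is excluded.

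Nonsingularity of the upper block $U_1$ also follows directly from $\|U_1-I_p\|_2<1$, since $I_p+\Delta U_1$ is then invertible. Alternatively, it follows from \Cref{thm:no_minus_one_eigs}, because $\frac{\pi}{6}<\frac{\pi}{4}$. Either way the polar factorization is well defined.

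The main step is the norm estimate. Write $t\xi=\begin{bmatrix}tA\\ tB\end{bmatrix}$ and $M_t=tM$ with $M=\begin{bmatrix}A&-B^T\\ B&0\end{bmatrix}\in\Skew(n)$.
\begin{enumerate}
\item Bound the spectral norm of $M$ by that of $\xi$. We have $\|M\|_2\le \left\|\begin{bmatrix}A&0\\B&0\end{bmatrix}\right\|_2+\left\|\begin{bmatrix}0&-B^T\\0&0\end{bmatrix}\right\|_2\le \|\xi\|_2+\|B\|_2\le 2\|\xi\|_2<\frac{\pi}{3}$. Hence all eigenvalues $i\lambda_j$ of $tM$ satisfy $|\lambda_j|\le t\|M\|_2<\frac{\pi}{3}$.
\item Unitarily diagonalize the normal matrix $tM=P\diag(i\lambda_j)P^*$. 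Then $\|E^T(\exp_m(tM)-I_n)E\|_2\le\|\exp_m(tM)-I_n\|_2=\max_j|e^{i\lambda_j}-1|=\max_j 2|\sin(\lambda_j/2)|$.
\item Since $|\lambda_j/2|<\frac{\pi}{6}$, this maximum is $<2\sin\frac{\pi}{6}=1$.
\end{enumerate}

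The only point needing care is the first step, the comparison $\|M\|_2\le 2\|\xi\|_2$. It requires the splitting of $M$ into the two pieces shown above, together with $\|B\|_2\le\|\xi\|_2$, since $B$ is a submatrix of $\xi$. This factor of two is exactly why the radius is $\frac{\pi}{6}$ rather than $\frac{\pi}{3}$. The other steps are routine, and the scaling by $t\in[0,1]$ only shrinks the $\lambda_j$.
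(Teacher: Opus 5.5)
Your proof is correct and follows the paper's argument. You bound $\|tM\|_2<\frac{\pi}{3}$, diagonalize to get $\|U_1-I_p\|_2\le\max_j 2|\sin(\lambda_j/2)|<1$, and then invoke \Cref{lem:special_corollary_of_23} to conclude $\|\tilde R-I_p\|_2<2$. The differences are minor: you justify $\|M\|_2\le 2\|\xi\|_2$ explicitly where the paper only asserts it, and you get nonsingularity of $U_1$ directly from $\|U_1-I_p\|_2<1$, whereas the paper cites \Cref{thm:no_minus_one_eigs}.
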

\begin{proof}
    It remains to be shown that $E^T\Exp_E^{\textnormal{Canon}}(t\xi)$ is nonsingular, but since $B_{\frac{\pi}6,2}(0)(0)\subset B_{\frac{\pi}4,2}(0)$ of \Cref{thm:no_minus_one_eigs}, we have that this holds. 
\end{proof}
By construction of the inverse polar-light retraction, \Cref{thm:no_minus_one_eigs_2} generalizes to all of $\St(n,p)$. 

Quantifying the radius of a corresponding domain under the Euclidean metric is difficult as the Riemannian exponential map in that case consists of a product of non-commuting matrix exponentials.
Plus, the practical benefits would be limited and therefore, we do not pursue this question.

\section{Hermite interpolation}\label{sec:Hermite_interpol}

As an application example, we consider Hermite interpolation on $\St(n,p)$. We exploit the fact that the inverse of the polar-light retraction
\eqref{eq:coordinate_cay_chart} 
and the inverse canonical Cayley retraction \eqref{eq:cayley_retraction}, can be computed in closed-form.

Let $t_1<t_2$.
The Hermite interpolant of a function $f:[t_1,t_2]\to \R^n$  on two sample points $y_1=f(t_1), y_2=f(t_2)$ is a linear combination of sample points and derivatives
\begin{equation}\label{eq:hermite_interpol}
    \gamma(t)=a_{00}(t)f(t_1)+a_{10}(t)f(t_1
    )+a_{10}(t)f'(t_1)+a_{11}(t)f'(t_1),
\end{equation}
where the coefficient functions $a_{00},\dots,a_{11}$ are the standard cubic Hermite polynomials and are listed in, e.g.,  \cite[Section 8.5]{Alfio:2007}.

\subsection{Hermite interpolation on $\St(n,p)$ using polar-light coordinates} 

We consider the coordinate chart \eqref{eq:coordinate_cay_chart} and assume, without loss of generality, that all data lie in a suitable neighborhood of $E$. Then, with point and derivative data $U^{(1)},\dot U^{(1)}$ and $U^{(2)},\dot U^{(2)}$ sampled at $t_1,t_2$, we map the point data to their local coordinate matrices $X^{(1)}=\psi_E^{\textsf{Cay}}(U^{(1)})$, $X^{(2)}=\psi_E^{\textsf{Cay}}(U^{(2)})$. Derivative data can be mapped bijectively to local coordinate images via $d(\psi_E)_U:T_U\St(n,p)\to \Skew(p)\times \R^{(n-p)\times p}$, which can be obtained by letting $U=\begin{bmatrix}
    U_1\\
    U_2
\end{bmatrix}\in \St(n,p)$ and $\Delta=\begin{bmatrix}
    \Delta_1\\
    \Delta_2
\end{bmatrix}\in T_U\St(n,p)$ and computing 
\begin{equation}\label{eq:d_psi}
    d(\psi^{\textsf{Cay}}_E)_U[\Delta]=\begin{bmatrix}
        4(L(U_1)+I_p)^{-1}d L_{U_1}[\Delta_1](L(U_1)+I_p)^{-1}\\
        \Delta_2(U_1^TU_1)^{-\frac12}+U_2(dJ_{U_1}[\Delta_1])
    \end{bmatrix},
\end{equation}
where
\begin{align*}
    L(U_1)=U_1J(U_1), \ \ J(U_1)=(U_1^TU_1)^{-\frac12}.
\end{align*} 
$dL_{U_1}[\Delta_1]=\Delta_1J(U_1)+U_1dJ_{U_1}[\Delta_1]$ and $d J_{U_1}[\Delta_1]=-(U_1^TU_1)^{-\frac12}D(U_1^TU_1)^{-\frac12}$, $D$ being the solution of the Lyapunov equation
\begin{equation*}
    (U_1^TU_1)^{\frac12}D+D(U_1^TU_1)^{\frac12}=U_1^T\Delta_1+\Delta_1^TU_1.
\end{equation*}
The solution $D$ exists and is unique since $(U_1^TU_1)^{\frac{1}{2}}\in SPD(p)$. After mapping all data to their respective local coordinate matrices, one may apply \eqref{eq:hermite_interpol} to obtain the manifold interpolant $\tilde F(t)=\varphi_E^{\textsf{Cay}}(\gamma(t))$. 

If the data is not contained in a neighborhood of $E$, select a point in the neighborhood of the data to act as center and construct the group action $\Phi_{(Q,S)}$ of \Cref{def:Phi}. Applying the group action to the Stiefel data and the derivative information moves the data to be in a neighborhood of $E$. After applying the outlined Hermite interpolation procedure above, the interpolant is given by $\tilde F(t)=\Phi_{(Q,S)}^{-1}(\varphi_E^{\textsf{Cay}}(\gamma(t)))$.

\subsection{Hermite interpolation on $\St(n,p)$ using the canonical Cayley retraction} Consider the inverse Cayley retraction \eqref{eq:inv_Cay_ret} and its derivative \eqref{eq:dinv_Cay_ret}. As in the previous section, assume that we are given point and derivative data $U^{(1)},\dot U^{(1)}$ and $U^{(2)},\dot U^{(2)}$ sampled at $t_1<t_2$. Choose either $T_{U^{(1)}}\St(n,p)$ or $T_{U^{(2)}}\St(n,p)$ as the reference tangent space, and map the data to this space using the inverse retraction \eqref{eq:inv_Cay_ret} and its derivative \eqref{eq:dinv_Cay_ret}. Choosing, for example, $T_{U^{(1)}}\St(n,p)$ as reference and applying \eqref{eq:hermite_interpol}, we obtain the interpolant $\tilde F(t)=\mcR_{U^{(1)}}(\gamma(t))$.

\subsection{Numerical example: Interpolating the Q factor in the QR decomposition}\label{sec:practical_example}

Consider the curve $Y:[0,2]\to \R^{n\times p}$
\begin{equation*}
    Y(t)=A_0+0.25A_1t+0.125A_2t^2+0.05A_3t^3,
\end{equation*}
where $A_0,A_1,A_2,A_3\in \R^{n\times p}$ are pseudo-randomly generated matrices with entries drawn uniformly from $[0,1]$. Consider the Stiefel curve
\begin{equation}\label{eq:Q_t_exp}
    Q(t) = \textnormal{qf}(Y(t)),
\end{equation}
where $\textnormal{qf}(Y(t))$ is the $Q$ factor in the unique compact QR decomposition $Q(t)R(t)=Y(t)$. The decomposition can be differentiated by applying \cite[Proposition 2.2]{WalterLehmannLamour2012}. We sample $Q(t)$ and its derivative $\dot Q(t)$ at $t_0=0$ and $t_1=2$ and apply the methodology for Hermite interpolation via the Cayley variant of the polar-light coordinates (PL) and via the canonical Cayley retraction (Cay) outlined in the previous sections. For reference, we also include Hermite interpolation via Riemannian normal coordinates under the Euclidean metric (RN) as discussed in \cite{ZimmermannHermite_2020}. For the experiments, the reference point for constructing the group action $\Phi_{(\hat Q,S)}$ is $Q(t_0)$. For the canonical Cayley retraction and the Riemannian normal coordinates, we use the tangent space $T_{Q(t_0)}\St(n,p)$.

The result of interpolating \eqref{eq:Q_t_exp} is presented in \Cref{fig:experiment}, where the curve is realized on $\St(80,40)$ and on $\St(800,40)$. It is seen that for $n\gg p$ (see plots (b) and (d)) that (PL) outperforms (Cay) and (RN) in terms of the manifold interpolation error. When $n$ approaches $2p=80$ using (Cay) is seen to lead to the smallest relative errors, and (PL) produces the largest. Surprisingly, (RN) does not outperform either of the two methods in this experiment. It is worth noting that, although the injectivity radius of $\St(n,p)$ under the Euclidean metric is $\pi$ \cite{zimmermann2025injectivity} and bounded by $0.913\pi$ under the canonical metric \cite[Corollary 7.2]{absil2024ultimate}. Yet, these are worst-case bounds are are attained only for special geodesics along low-rank tangent directions \cite{stoye:2026}. The interpolation routines still work for the present examples. 
\begin{figure*}[ht!]
    \centering
    \begin{subfigure}[t]{0.49\textwidth}
        \centering
        \includegraphics[width = 0.9\textwidth]{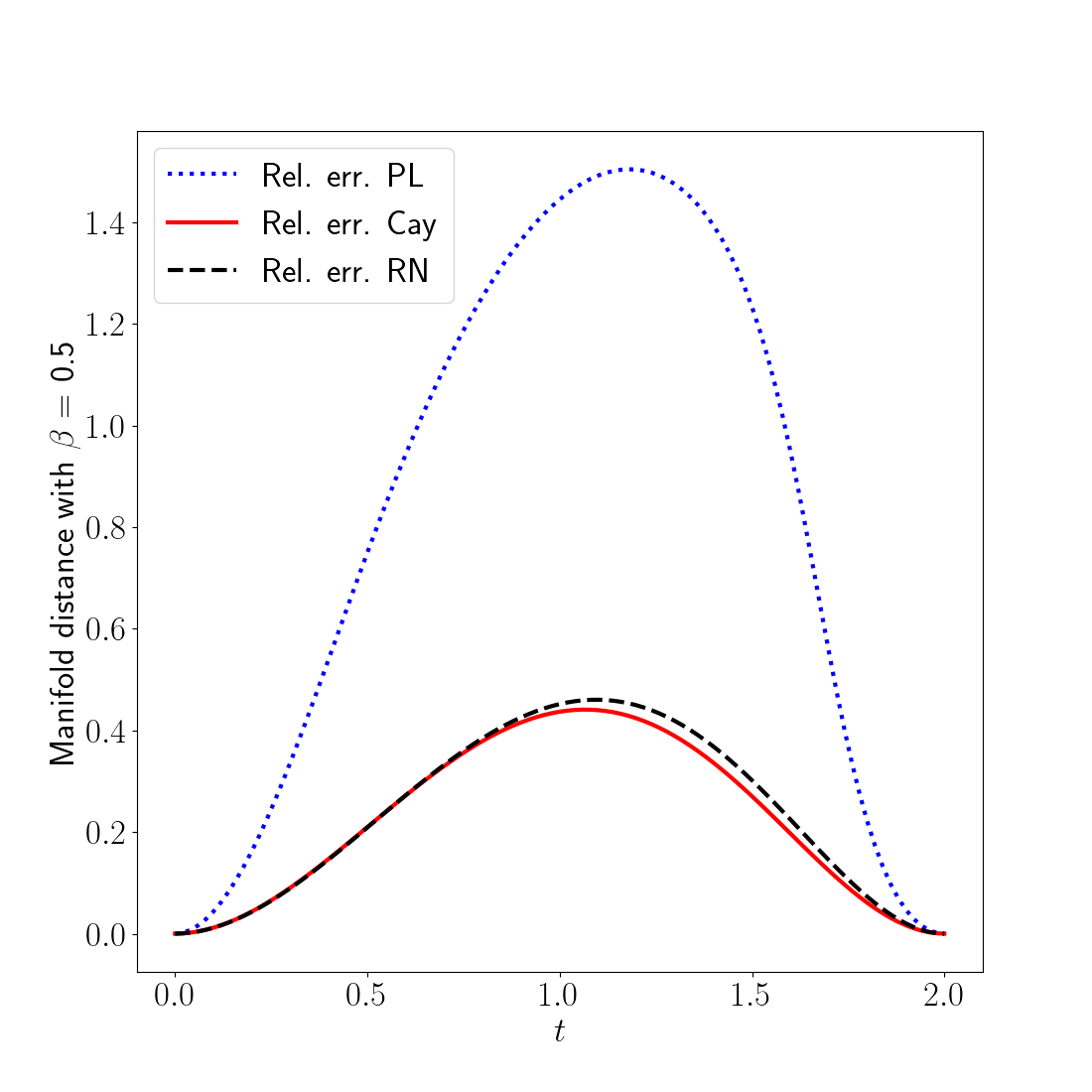}
        \caption{\centering $n=80,p=40$,\\ $\dist_{\textnormal{Canon.}}(U(t_0),U(t_1))=4.76$.}
    \end{subfigure}%
    ~ 
    \begin{subfigure}[t]{0.49\textwidth}
        \centering
        \includegraphics[width = 0.9\textwidth]{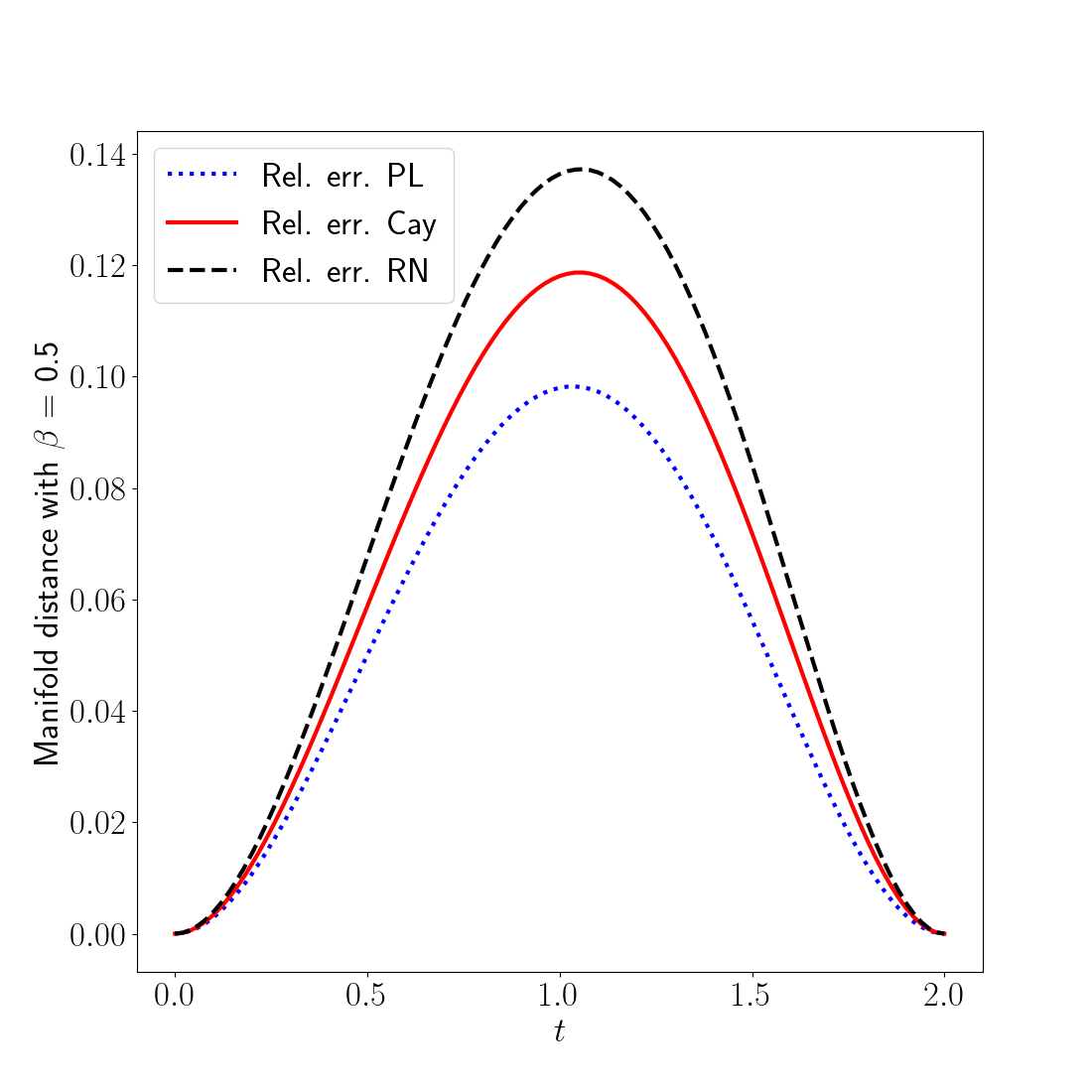}
        \caption{\centering $n=800,p=40$,\\ $\dist_{\textnormal{Canon.}}(U(t_0),U(t_1))=4.30$.}
    \end{subfigure}
    \begin{subfigure}[t]{0.49\textwidth}
        \centering
        \includegraphics[width = 0.9\textwidth]{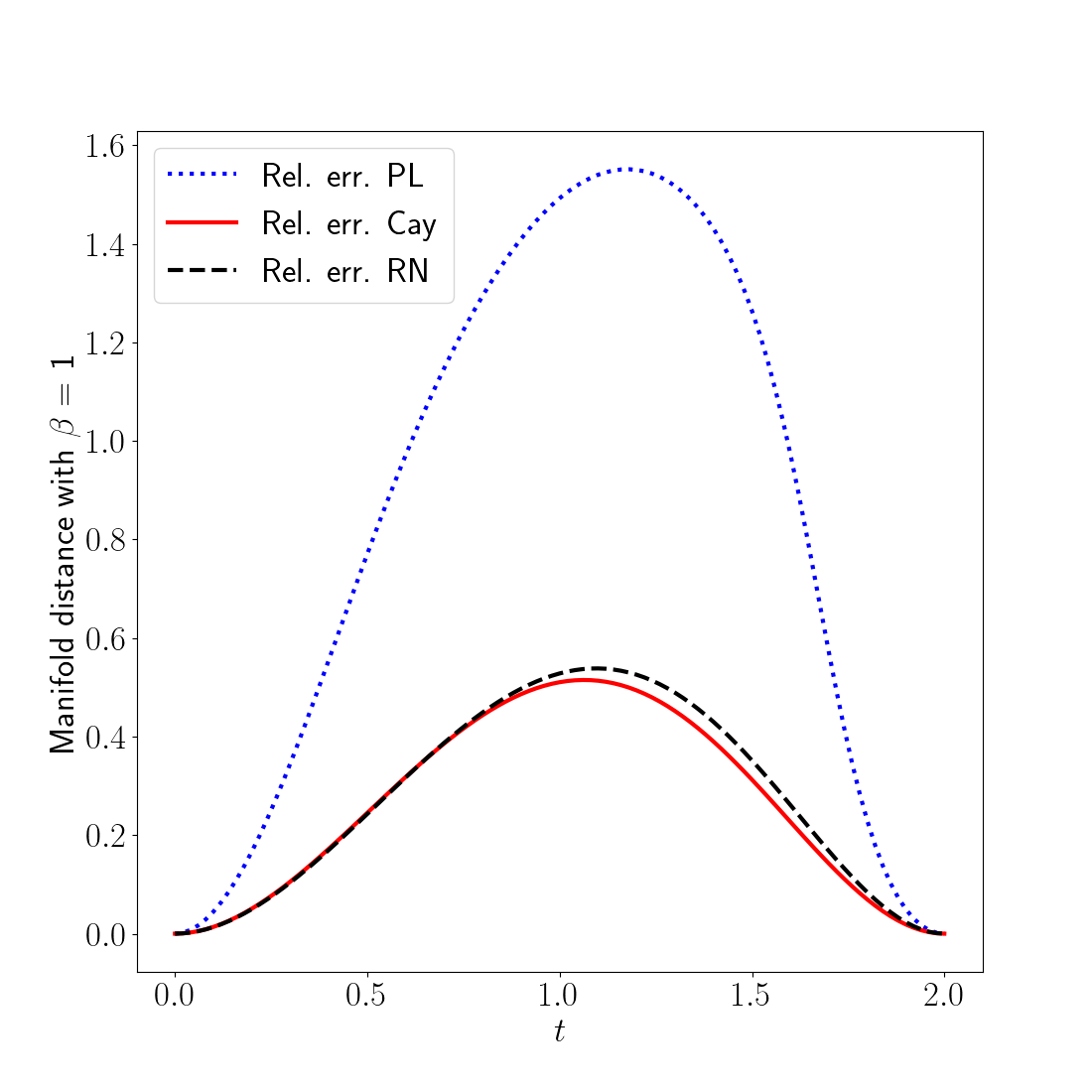}
        \caption{\centering $n=80,p=40$,\\ $\dist_E(U(t_0),U(t_1))=5.30$.}
    \end{subfigure}%
    ~ 
    \begin{subfigure}[t]{0.49\textwidth}
        \centering
        \includegraphics[width = 0.9\textwidth]{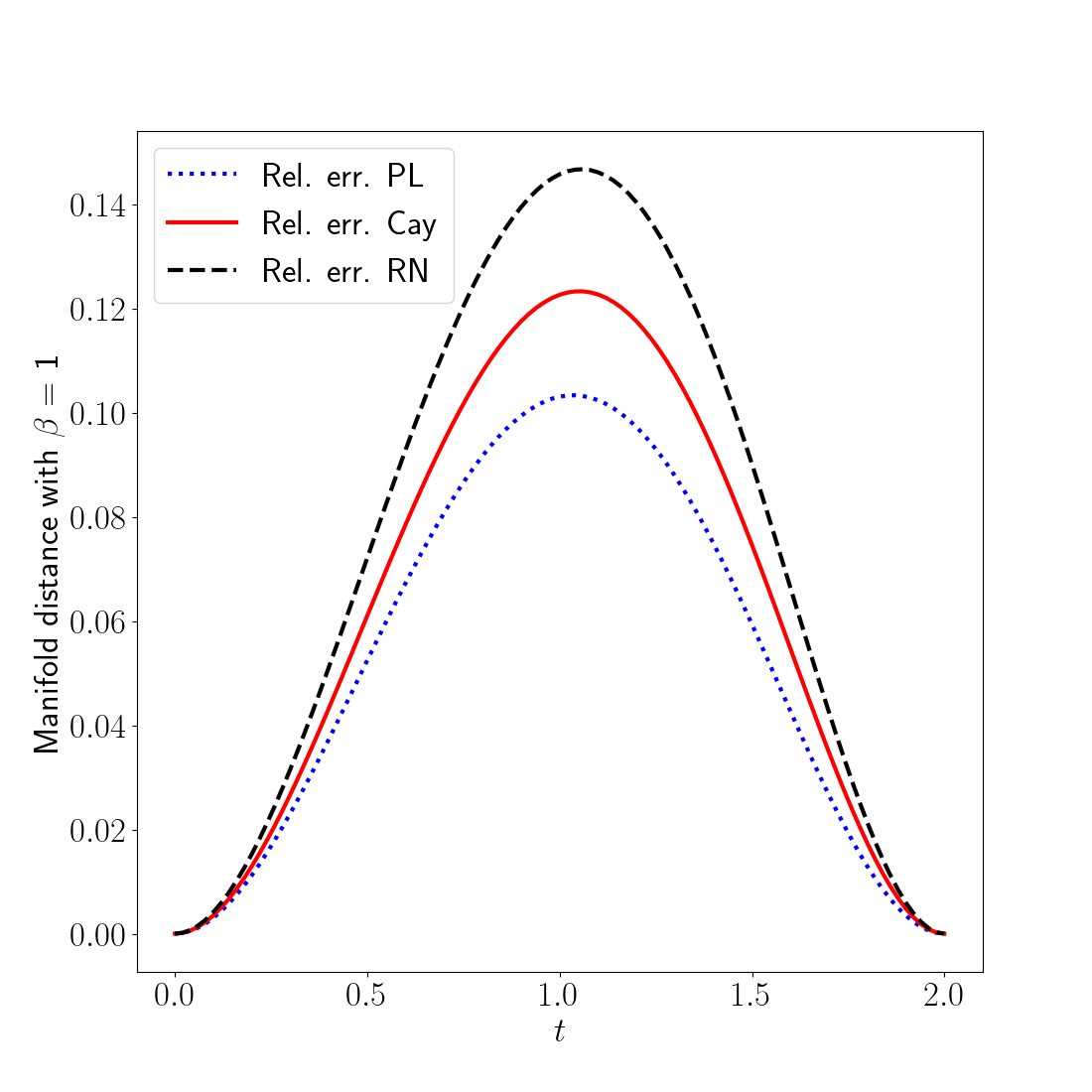}
        \caption{\centering $n=800,p=40$,\\ $\dist_E(U(t_0),U(t_1))=4.35$.}
    \end{subfigure}
    \caption{Manifold interpolation errors of interpolating the Stiefel curve \eqref{eq:Q_t_exp} under two choices of dimensions. $\dist(U(t_0),U(t_1))$ is computed under the canonical metric ($\beta = 0.5$) in the plots (a) and (b) and under the Euclidean metric ($\beta=1$) in plots (c) and (d).}
    \label{fig:experiment}
\end{figure*}

\section{Concluding remarks}\label{sec:concluding_remarks} 
In this article, we have examined the only two known second-order retractions on the Stiefel manifold with explicit inverses,
namely the polar-light retraction which is based on the matrix polar decomposition, and the canonical Cayley retraction
in terms of their conditioning and numerical properties.
In forward-mode, the retractions are well-conditioned.
The absolute condition number of the polar-light retraction is bounded by $3$ (\Cref{lem:cond_phi}, \Cref{lem:cond_phi_Cay}); the absolute condition number of the canonical Cayley retraction is bounded by $2$ (\Cref{lem:cond_canon_cayley}).
The inverses of the retractions at base point $\hat U$ map a Stiefel matrix $U$ to a tangent vector. The inverse maps can become arbitrarily ill-conditioned (\Cref{lem:condition_invSPD_factor}, equation \eqref{eq:cond_invcanon_cayley}). For the inverses of both retractions to be well-defined, it is essential 
\begin{itemize}
    \item that the polar factor $\hat U^T U$ must not have an eigenvalue of $-1$. (polar-light retraction)
    \item that $\hat U^T U$ must not feature an eigenvalue of $-1$.
    (canonical Cayley retraction)
\end{itemize}
If the base point is $\hat U=E=\begin{bmatrix}
    I_p\\
    0
\end{bmatrix}$,
these requirements become conditions for upper diagonal block $U_1$ of $U$. In practice, ill-conditioning can be countered by using a group action to send data to a neighborhood of $E$, which features a perfectly conditioned upper diagonal block. (\Cref{def:Phi}, \Cref{prop:isometry})
We have constructed explicit though conservative normal neighborhoods for any point on $\St(n,p)$, in which we are guaranteed that the inverse canonical Cayley retraction and the inverse polar-light retraction are well-defined. They are given by the images of the open balls $B_{\frac{\pi}{6},2}(0)$ and $B_{\frac{\pi}{4},2}(0)$, respectively, under the Riemannian exponential map under the canonical metric. (\Cref{thm:no_minus_one_eigs}, \Cref{thm:no_minus_one_eigs_2})

As main application, we have considered manifold interpolation. We have shown that the interpolation error, when applying a Euclidean interpolation scheme to interpolate local coordinate images computed by either of the retractions, is propagated to the manifold with asymptocic amplification factor bounded by the condition constants $K=2$ for the canonical Cayley retraction (\Cref{thm:main_theorem_Cay_ret}),
and $K=3$ for the polar-light retraction 
(\Cref{thm:main_theorem}, \Cref{thm:main_theorem_Cayley}).

The numerical example shows that using a retraction can lead to smaller interpolation errors when compared to working in Riemannian normal coordinates, but selecting the optimal coordinates is seemingly problem-dependent. 

\section*{Acknowledgments}

\bibliographystyle{siamplain}
\bibliography{SurrogateModelling}
\end{document}